\definecolor{myblue}{HTML}{1E5BAE}
\tikzset{
	box/.style={
		draw=myblue, very thick, rounded corners,
		inner sep=6pt, minimum width=6.6cm, minimum height=3.2cm,
		align=left
	},
	arr/.style={-Stealth, very thick, myblue},
	darr/.style={<->, very thick, myblue},
	note/.style={myblue, font=\bfseries\large}
}
\newtheorem{theorem}{Theorem}[section]
\newtheorem{corollary}{Corollary}[section]
\newtheorem{prop}{Proposition}[section]
\newtheorem{definition}{Definition}[section]
\newtheorem{example}{Example}[section]
\theoremstyle{definition} % <-- upright text
\newtheorem{remark}{Remark}[section]
\theoremstyle{remark}
\newcommand{\suit}[1]{{#1}(n,k)}
\newcommand{\we}{A_{m,r}(n,k)}
\newcommand{\ew}[2]{A_{m,r}(#1,#2)}
\newcommand{\coul}[1]{#1_{n,k}}
\newcommand{\opd}[1]{\mathcal{#1}}
\newcommand{\idc}{\in \natu}
\newcommand{\agenm}[2]{(#1)^{(\underline{#2})}}
\newcommand{\natu}{\mathbb{N}}
\newcommand{\varn}{\frac{z^n}{n!}}
\newcommand{\van}{\frac{t^n}{n!}}
\newcommand{\vatn}{\frac{t^n}{n!}}
\newcommand{\nkid}{_{n,k\idc}}
\begin{document}

	\begin{flushleft}
		\centering
		
	\end{flushleft}
	\begin{center}
		
		{\huge\bf{  Triangular Arrays using context-free grammar }}
		
		\vspace{0.5cm}
		\textbf{AUBERT Voalaza Mahavily Romuald} \\
		\texttt{aubert@aims.ac.za} \\
		Department of Mathematics, Faculty of Science \\
		Laboratoire de Mathématiques et Applications de l’Université de Fianarantsoa (LaMAF) \\
		University of Fianarantsoa \\
		Postal Adress 601, Fianarantsoa, Madagascar
		
		\vspace{0.5cm}
		
		\textbf{RANDRIANIRINA Benjamin} \\
		\texttt{benjamin.randrianirina@univ-fianarantsoa.mg} \\
		Department of Mathematics, Faculty of Science \\
		Laboratoire de Mathématiques et Applications de l’Université de Fianarantsoa (LaMAF) \\
		University of Fianarantsoa \\
		Postal Adres 601, Fianarantsoa, Madagascar
	\end{center}

	\section*{Abstract} 
	
	In this work, the Hao grammar $G=\{\, u\rightarrow u^{b_1+b_2+1} v^{a_1+a_2},\quad 
	v\rightarrow u^{b_2}v^{a_2+1} \,\},$
	together with the correspondence between grammars and combinatorial differential equations, is employed to obtain an interpretation of any triangular array of the form
	\[
	T(n,k)=(a_2 n + a_1 k + a_0)\,T(n-1,k)
	+ (b_2 n + b_1 k + b_0)\,T(n-1,k-1).
	\]
	This lead to have an interpretation of $T(n,k)$ as an increasing tree.
	Explicit formulas and structural properties are then derived through analytic differential equations.  
	In particular, the $r$-Whitney-Eulerian numbers and the cases where $b_2n+b_1k+b_0=1$ are obtained explicitly.
	
	\noindent Applications include new interpretation formulas for the $r$-Eulerian numbers with generating functions. We also obtain full generating functions for the case $a_2=-a_1$ using this approach.
	
	\noindent
	\textbf{Keywords:} triangular recurrence, formal grammar,  differential equations, $r$-Eulerian, combinatorial interpretation, $r$-Whitney--Eulerian.

	\section*{Introduction }
	In mathematics, certain sequences of numbers satisfy a triangular recurrence of the form 
	
	\begin{equation}\label{recu}
		\suit{T}=(a_0+a_1k+a_2n)T(n-1,k)+(b_0+b_1k+b_2n)T(n-1,k-1).
	\end{equation}
	
	\noindent After discussing the binomial coefficients, the Stirling numbers, and the Eulerian numbers, Graham et all proposed a generalization problem of the form \eqref{recu} in  \cite{GrahamKnuthPatashnik1994}. 
	In combinatorics, several approaches have been developed regarding these kinds of numbers: the GKP numbers\footnote{Graham, Knuth, and Patashnik.}. One of the results is due to Neuwirth \cite{NEUWIRTH200133}\footnote{It is what Spivey(\cite{Spivey2011JIS} said. We check this paper: it is not there, maybe in another literature).}, who obtained an explicit formula for the case $b_2=0$ using the Galton triangle. Spivey \cite{Spivey2011JIS} found several cases using finite differences. Analytical approaches have also appeared in various works: Théorêt \cite{theoret1994hyperbinomiales}, Wilf \cite{wilf2004methodcharacteristicsproblem89}, Barbero \cite{BARBEROG2014146}.
	
	Grammatical approaches and interpretations have also already been developed by Hao and al. \cite{hao2015context}, Zhou and al. \cite{Zhou2022}.
	To study the real-rootedness of polynomial $T_n(x)=\sum_k T(n,k) x^k$, Hao (\cite{hao2015context}) introduced the grammar $G=\{\, u\rightarrow u^{b_1+b_2+1} v^{a_1+a_2},\quad 
	v\rightarrow u^{b_2}v^{a_2+1} \,\}$ and proved that 
	\[\opd{G}^n(u^{b_0+b_1+b_2}v^{a_0+a_2})= \sum_{k=0}^n \suit{T} u^{b_2 n+b_1 k +b_0+b_1+b_2} v^{a_2 n + a_1 k +a_0+a_2}.\]
	On another hand, the context-free grammar, by Chen (\cite{CHEN1993113}), developed by Dumont (\cite{dumont1996william}) was continued by  Randrianirina (\cite{Randrianirina2000}) with more species interpretation. These last authors showed the relations between the  grammar relation and a system of differential equations.
	This approach say that we can associate the grammar with a system of differential equations 
	\begin{equation*}
		\left\{
		\begin{array}{ll}U'= U^{b_1+b_2+1}V^{a_1+a_2},& U(0)=u\\ V'= U^{b_2}V^{a_2+1}&
			V(0)=v;\\
		\end{array}\right.
	\end{equation*}
	and the solution of this system verifies
	\begin{equation*}
		\left\{
		\begin{array}{ll}U(t)= Gen(u,t)=\sum_{n\geq 0}\mathcal{G}^n(u)\frac{t^n}{n!}\\ V(t)= Gen(v,t)=\sum_{n\geq 0}\mathcal{G}^n(v)\frac{t^n}{n!}.
		\end{array}\right.
	\end{equation*}
	
	\noindent In this work, we exploit and combine the grammar of Hao with this fact to study the sequence of triangular recurrence.	
	The recurrence \eqref{recu} generally arises in the enumeration of combinatorial structures and objects. Moreover, the associated grammar can provide new combinatorial interpretations of the numbers $\big(\suit{T}\big)_{n,k\ge 0}$, as Ramírez \cite{ramirezarxiv} did for the $r$-Whitney numbers.
	
	We consider an initial conditions usually taken as
	\begin{equation}\label{condiusue}
		T(0,0)=1, \quad \suit{T}=0 \text{ if } n<\max(k,0).
	\end{equation}
	
	\noindent
	We remain in the setting where $a_0+a_1k+a_2n\not=0$ and $b_0+b_1k+b_2n\not=0$. It is more convenient for us to work in the case where $b_0+b_1k+b_2n=1$.  However, generalizations of Eulerian numbers where $b_0+b_1k+b_2n\neq 1$ are also important examples when discussing sequences satisfying \eqref{recu}.
	
	This task begins by recalling the background needed to understand the resolution of combinatorial systems of differential equations. We then present the method we will use, which combines context-free grammars with systems of differential equations. Finally, we conclude with several applications.
	
	\section{Recalls}
	In this section, we recall the notions required for understanding this work. These include the solution of combinatorial differential equations.
	A complete and detailed account of species theories may be found in Bergeron and all \cite{bergeron1998combinatorial}.
	The methods for solving combinatorial differential equations and systems of combinatorial differential equations are presented by Leroux and Viennot in \cite{leroux-viennot} (see also Bergeron and all \cite{bergeron1998combinatorial} Chapter 5). Further developments on $\mathbb{L}$-species, mixed species, and the analysis of initial conditions different from $0$ and $1$ are given by Randrianirina in \cite{randrianirina2000combinatoire} (see also Randrianirina \cite{Randrianirina2000}).

	According to André Joyal (\cite{joyal1981theorie}, definition 1), a species of structure, also known as a $\mathbb{B}$-species, is a functor from the category $\mathbb{B}$ of finite sets and bijection to the category $\mathbb{B}$. A linear species or $\mathbb{L}$-species (André Joyal \cite{joyal1981theorie}, definition 12) is a functor from the $\mathbb{L}$ of  finite linear orders and order-preserving bijection to the category $\mathbb{B}$.
	The exponential generating series of a species $\mathcal{F}$ ($\mathbb{B}$ or $\mathbb{L}$) is the series $\mathcal{F}(t) = \sum_{n\geq 0}f_n \frac{t^n}{n!}$,
	where $f_n = |\mathcal{F}[n]$|.
	
	Operations sum, product, composition, and derivations that are compatible with the
	transition to exponential generating series are defined on the class of species ($\mathbb{B}$ or $\mathbb{L}$).
	
	\noindent
	However, integration
	is only possible for $\mathbb{L}$-species. If $\mathcal{F}$ is an $\mathbb{L}$-species, then the integral of $\mathcal{F}$, denoted $\int \mathcal{F}$, is defined for every finite totally ordered set $l$ by
	$(\int\mathcal{F})(l) = \emptyset$ if $l = \emptyset$, and $( \mathcal{F})(l) = \mathcal{F}(l \setminus \{\min l\})$ if $l \neq\emptyset$.
	A mixed species is a functor from the category $\mathbb{L}\times \mathbb{B}$ to the category of  finite  sets and  bijection.
	
	The method for solving systems of differential equations is essentially due to P. Leroux and G.X. Viennot in \citealp{leroux-viennot}).
	In full generality, the data of an $\mathbb{L}$-species or any $\mathbb{B}$-species $\mathcal{F}$ allow us to write the combinatorial differential equation:
	\begin{equation}
		Y' = \mathcal{F}(Y) ;\quad Y(0) = Z
		\label{1.1}
	\end{equation}
	where $Z$ is a sort of point representing the initial condition.
	This	equation  may be written in integral form:
	\begin{equation}
		Y(T,Z) = Z + \int_{0}^{T} \mathcal{F}(Y(X,Z))\, dX
		\label{1.3}
	\end{equation}
	This integral equation  is interpreted by Figure \ref{fig:(1.1)}, which is an iterative process for constructing the combinatorial solution of equation (\ref{1.1}). 
	\begin{figure}[H]
		\begin{tikzpicture}[line cap=round,line join=round,>=triangle 45,x=1cm,y=1cm]
			\clip(-5,0) rectangle (24.17,3);
			\draw [-] (0,0)-- (0,3);
			\draw [-] (0,3)-- (6,3);
			\draw [-] (6,3)-- (6,0);
			\draw [-] (6,0)-- (0,0);
			\draw [-] (0.53,0.71) circle (0.32249030993194205cm);
			\draw [-] (2.53,2.01) circle (0.31048349392520064cm);
			\draw [-] (3.51,2.13) circle (0.32249030993194205cm);
			\draw [-] (4.33,2.11) circle (0.30594117081556726cm);
			\draw [-] (5.17,1.83) circle (0.3231098884280701cm);
			\draw [-] (3.71,0.71)-- (4.932577740989851,1.6108409916829391);
			\draw [-] (3.71,0.71)-- (2.654447632017464,1.725548269674368);
			\draw [-] (3.71,0.71)-- (3.51,1.8075096900680578);
			\draw [-] (3.71,0.71)-- (4.21637630164624,1.8259407541156003);
			\draw [shift={(3.71,0.71)},line width=0.5pt]  plot[domain=0:2.825976991551027,variable=\t]({1*0.54*cos(\t r)+0*0.54*sin(\t r)},{0*0.54*cos(\t r)+1*0.54*sin(\t r)});
			\draw (4.9,2.1) node[anchor=north west] {$Y$};
			\draw (4.07,2.37) node[anchor=north west] {$Y$};
			\draw (3.25,2.38) node[anchor=north west] {$Y$};
			\draw (2.27,2.27) node[anchor=north west] {$Y$};
			\draw (0.25,1.) node[anchor=north west] {$Y$};
			\draw (2.27,0.8) node[anchor=north west] {$ou$};
			\draw (3.31,0.6) node[anchor=north west] {$min$};
			\draw (1.55,0.5) node[anchor=north west] {$z$};
			\draw (4.35,1.21) node[anchor=north west] {$\mathcal{F}$};
			\draw (1.07,0.8) node[anchor=north west] {=};
			\begin{scriptsize}
				\draw [fill=black] (3.71,0.71) circle (2.5pt);
				\draw [color=black] (1.73,0.67) circle (3.5pt);
			\end{scriptsize}
		\end{tikzpicture}
		\caption{Integral equation}
		\label{fig:(1.1)}
	\end{figure}
	
	The general solution is the increasing $\mathcal{F}$-enriched tree $A_F(T,Z)$, who is a $\mathbb{L}$-species if $\mathcal{F}$ is a $\mathbb{L}$-species, and a mixed species if $\mathcal{F}$ is a $\mathbb{B}$-species. We have:
	\begin{equation}
		A_F(T,Z)=\exp(T\mathcal{D})(Z)=\sum_{n\geq 0}\frac{(T\mathcal{D})^n}{n!}(Z)
		\label{equa1.11}
	\end{equation}
	where $\mathcal{D}=\mathcal{F}(Z)\frac{d}{dZ}$ is the combinatorial differential operator associated with (\ref{1.1}).
	
	An analytical initial condition of the form $y(0) = x$ is combinatorially translated as $Y (0) = 1_x$ , where $1_x$ is the species of the empty set, weighted by $x$. If $\mathcal{F}$ is a $\mathbb{B}$-species, the combinatorial differential equation $Y' = \mathcal{F}(Y), Y(0) = 1_x$  makes sense and its solution is
	the weighted $\mathbb{L}$-species $\mathbb{A}_\mathcal{F}(T ) = T_{X=x}({A}_\mathcal{F} (T,X))$ of the types with respect to the variable $X$ of ${A}_\mathcal{F} (T, X)$ (see Randrianirina \cite{randrianirina2000combinatoire} theorem 6.3). Its generating series $y(t) = \mathbb{A}_\mathcal{F}(t)$ is generally the solution of the differential equation:
	$$y'(t) = Z_\mathcal{F} (y(t); x, x^2 ,x^3,\cdots),~~y(0) = x$$ 
	where $Z_\mathcal{F}$ is the cycle index series of $\mathcal{F}$. If $\mathcal{F}$ is asymmetric, this equation becomes:
	$$y'(t) = \mathcal{F}(y),~~y(0) = x.$$
	These results can be generalized to the system of combinatorial differential equations:
	\begin{equation}
		Y_i' = \mathcal{F}_i(Y_1, Y_2, \ldots, Y_k), \quad Y_i(0) = Z_i,\quad 1 \leq i \leq k
		\label{1.5}
	\end{equation}
	The solution of system (\ref{1.5}) is the $k$-tuple of $\overrightarrow{\mathcal{F}}$-enriched increasing trees $\overrightarrow{\mathcal{A}}_{\overrightarrow{\mathcal{F}}}=(\mathcal{A}_{{\overrightarrow{\mathcal{F}}},1},\cdots,\mathcal{A}_{{\overrightarrow{\mathcal{F}}},1})$, where $\mathcal{A}_{{\overrightarrow{\mathcal{F}}},i}$ is the solution of the equation $Y_i' = \mathcal{F}_ i (Y_1 ,\cdots , Y_k ), Y_i (0) = X_i$. 
	Throughout the remainder of this paper, we assume that each $\mathcal{F}_i$	is an asymmetric $\mathbb{B}$-species.
	
	According to  Chen \cite{CHEN1993113} (see also Dumont \cite{dumont1996william}) a context-free grammar $G$ on $X$ is a map from $X$ to $\mathbb{C}[X]$, where $X=\{x_1,x_2,\cdots\}$ be an alphabet and $\mathbb{C}[X]$ a commutative algebra of polynomials in the letters $x_i$. 
	
	For each grammar $G$, we associate a differential operator
	\[
	\opd{G}=\sum_{x\in X} G(x) \frac{\partial}{\partial x}.
	\]
	satisfying $\mathcal{G}(f+g)=\mathcal{G}(f)+\mathcal{D}(g)$ and $\mathcal{G}(fg)=\mathcal{G}(f)g+f\mathcal{G}(g)$, for all $g\in\mathbb{C}[X]$.
	
	For $u\in C[X]$, we associate an exponential generating function
	\[
	\mathrm{Gen}(u,t):=\sum_{n\ge 0} \opd{G}^n(u)\,\frac{t^n}{n!}.
	\]
	Thus, for all $u,v\in C[X]$,
	\begin{equation}\label{algG}
		\begin{array}{c}
			\mathrm{Gen}(u+v,t)=\mathrm{Gen}(u,t)+\mathrm{Gen}(v,t),~~
			\mathrm{Gen}(uv,t)=\mathrm{Gen}(u,t)\mathrm{Gen}(v,t)\\  
			\quad\text{and}\quad \frac{\partial}{\partial t}\mathrm{Gen}(u,t)
			= \opd{G}\big(\mathrm{Gen}(u,t)\big).
		\end{array}
	\end{equation}
	Let $X=\{x_1,x_2,\cdots,x_k\}$ be the alphabet. We have the following proposition, announced by Dumont in \cite{dumont1996william} and proved by Randrianirina in \cite{randrianirina2000combinatoire} 
	(see also \cite{Randrianirina2000}):
	\begin{prop}\label{chendumrabez}
		Let $\vec{y}(t)=(y_i(t))_{i=1,\ldots,k}$ be the solution of the analytic differential system
		\begin{equation}
			y_i'(t) = G_i(\vec{y}(t)), \qquad y_i(0)=x_i, \quad i=1,\ldots,k.
			\label{equa6}
		\end{equation}
		Then for each $i$, $\mathrm{Gen}(x_i,t)=y_i(t)$.
	\end{prop}
	The following theorem is proved by Randrianirina in 
	\cite{randrianirina2000combinatoire}:
	
	\begin{theorem}\label{rabeza}
		Given asymmetric species $(G_i)_{i=1,\ldots,k}$, the following data are equivalent:
		\begin{enumerate}
			\item the combinatorial differential system
			\begin{equation}
				Y_i' = G_i(\vec{Y}); \qquad Y_i(0)=X_i, \quad i=1,\ldots,k;
				\label{equa71}
			\end{equation}
			\item the associated combinatorial differential operator
			\begin{equation}
				\opd{G} = \sum_{i=1}^{k} G_i(\vec{X}) \frac{\partial}{\partial X_i};
				\label{equa81}
			\end{equation}
			\item William Chen grammar, where $G_i(\vec{x})$ is the generating series of $G_i$: 
			\begin{equation}
				G=\{ x_i \mapsto G_i(\vec{x}); \qquad i=1,\ldots,k\};
				\label{equa91}
			\end{equation}
			\item the analytic differential system
			\begin{equation}
				y_i' = G_i(\vec{y}(t)); \qquad y_i(0)=x_i, \quad i=1,\ldots,k.\label{equa100}
			\end{equation}
		\end{enumerate}
	\end{theorem}
	\noindent
	The combinatorial differential operator $\mathcal{G}$ makes it possible to construct the combinatorial solutions of system (\ref{equa71}). The solutions $(y_i(t))$ of the system of analytic differential equations (\ref{equa100}) are the generating series of the isomorphism types of these solutions. And for all $i\in[k]$, $y_i(t)=\mathrm{Gen}(x_i,t)$.
	
	\section{GPK numbers and the grammar of Hao et al.}
	
	Let us consider the sequence \(\big(\suit{T}\big)_{n,k}\) satisfying (\ref{recu}):
	\begin{equation*}
		\suit{T} = (a_2 n + a_1 k + a_0) T(n-1,k) + (b_2 n + b_1 k + b_0) T(n-1,k-1).
	\end{equation*}
	\noindent
	Let \(G\) be the grammar
	\begin{equation}
		G = \{\, u \rightarrow u^{b_1+b_2+1} v^{a_1+a_2}, \;\;
		v \rightarrow u^{b_2} v^{a_2+1} \,\}, \label{gram2}
	\end{equation}
	so that the associated differential operator is
	\[
	\mathcal{G} = u^{b_1+b_2+1} v^{a_1+a_2} \frac{\partial}{\partial u} + u^{b_2} v^{a_2+1} \frac{\partial}{\partial v}.
	\]   
	\noindent
	Hao et al.~\cite{hao2015context} proved the following result.
	\begin{prop} \label{phaores}
		if $a_1\geq 0$, $a_1+a_2\geq 0$, $a_1+a_3\geq 0$, $b_1\geq 0$, $b_1+b_2\geq 0$ and $b_1+b_2+b_3\geq 0$, then:
		\begin{equation}\label{haores}
			\opd{G}^n(u^{b_0+b_1+b_2}v^{a_0+a_2})= \sum_{k=0}^n \suit{T} u^{b_2 n+b_1 k +b_0+b_1+b_2} v^{a_2 n + a_1 k +a_0+a_2}.
		\end{equation}
	\end{prop}

\begin{example}[The $r$-Whitney-Eulerian numbers $\we$]\rm In Foata \cite{foata2005theoriegeometriquedespolynomes}, ${}^rA(n,k)$ is defined as the number of $\sigma \in S_n$ having $k$ $r$-excedances, where $j \in [n]$ is an $r$-excedance of $\sigma$ if $j+r \leq \sigma(j)$. Here we use the notation $A_r(n,k)$. The author proved that these numbers satisfy the recurrence relation:
\begin{equation}\label{ramr}
	A_r(n,k)=(k+r) A_r(n-1,k) +(n-k+1-r) A_r(n-1,k-1).   
\end{equation}

\noindent	Riordan (\cite{Riordan1958IntroductionTC}) (also in Maier \cite{Maier_2023}) discussed interpretations of the numbers satisfying relation \eqref{ramr}, such as statistics of $r$-descents. This is a generalization of the Eulerian numbers $A_r(n,k)=\left\langle {n \atop k} \right\rangle$. It is also the number of $\sigma \in S_n$ having $n-r-k$ indices $i \in [n-1]$ satisfying $\sigma(i) < \sigma(i+1)$ and $\sigma(i) < n-r$.
To generalize, first, we neglect the fact that this definition requires $r<n$. We start with $r$ completely arbitrary. Second, we consider that each $i \in [n]$ may have $m$ different types. These ideas inspire the definition of the $r$-Whitney-Eulerian numbers $\we$. A combinatorial interpretation is given in Thamrongpairoj \cite{Thamrongpairoj1997}.

\noindent	These numbers are given by the recurrence relation ($\we=0$ if $n<\max(0,k)$):
\begin{equation*}
	\ew{0}{0}=1 \quad \text{and} \quad 
	\we= (mk+r) \ew{n-1}{k}+ \big(mn-mk+m-r  \big) \ew{n-1}{k-1}.
\end{equation*}

\noindent	The grammar of Hao associated is then 
\begin{align*}
	G=\{u&\to uv^m, \\
	v&\to u^mv \};
\end{align*} 
and the system of differential equations is
\begin{align}
	&\left\{\begin{array}{cl}
		U'= & UV^m, \quad U(0)=u,  \\
		V'= &U^mV, \quad V(0)=v.
	\end{array}\right. \label{001} 
\end{align}
First, let's see the analytic solution
\begin{align}
	&\left\{\begin{array}{cl}
		U(t)=\Big(\dfrac{(u^m-v^m)u^m}{u^m-v^m e^{(u^m-v^m)mt}}   \Big)^{\frac{1}{m}},  \\
		V(t)=\Big(\dfrac{(v^m-u^m)v^m}{v^m-u^m e^{(v^m-u^m)mt}}   \Big)^{\frac{1}{m}}.
	\end{array}\right.   \label{eq112}
\end{align}
Then, we have
\begin{align*}
	Gen(u^{m-r}v^r,t)=&\sum_{n\idc} \opd{G}^n(u^{m-r}v^r) \van=\bigg( U(t) \bigg)^m \times \bigg(\frac{V(t)}{U(t)}\bigg)^r.
\end{align*}
So the first thing that Theorem \ref{rabeza} and
the  Proposition \ref{phaores} give us 

\begin{align}
	\sum_{n\idc} \Big( \sum_{k=0}^{n} \we u^{mn-mk} v^{mk} \Big) \van=& \dfrac{(u^m-v^m)e^{(u^m-v^m)rt}}{u^m-v^m e^{(u^m-v^m)mt}}.
\end{align}
This equation can be used to get many things. For instance
\begin{equation}\label{sumamr}
	\sum_{k=0}^{n} \we =m^n n!;
\end{equation}

\begin{equation}
	\sum_{k=0}^{n}(-1)^k \we =2^n \sum_{k} \binom{n}{k} m^k E_k(0) r^{n-k};
\end{equation}

\begin{equation}\label{amrnk}
	\sum_{k\idc} \Big( \sum_{n\idc}\we \van\Big) x^k= \frac{(1-x)e^{r(1-x)t}}{1-xe^{(1-x)mt}}.
\end{equation}
\end{example} \noindent
We can get also the explicit formula of $\we$. Indeed,
\begin{align}
\frac{(1-x)e^{r(1-x)t}}{1-xe^{(1-x)mt}}=&(1-x)\sum_{j\geq 0} x^j e^{(1-x)(mj+r)t} \notag \\
=&(1-x)\sum_{j\geq 0} x^j \sum_{n\geq 0} \frac{\big((1-x)(mj+r)t\big)^n}{n!}  \notag \\
=&\sum_{n\geq 0} \Big(  (1-x)^{n+1} \sum_{j\geq 0} x^j (mj+r)^n   \Big) \van   \notag \\
=&\sum_{n\geq 0} \Bigg( \sum_{l\geq 0} \binom{n+1}{l}x^l \sum_{j\geq 0}  (mj+r)^n  x^j \Bigg) \van  \notag \\
=&\sum_{k\idc}  \sum_{n\idc}\Bigg(\sum_{j=0}^{j=k}(-1)^j \binom{n+1}{j} \big(m(k-j)+r\big)^n\Bigg) \van x^k. \label{genmrr}
\end{align}
\eqref{amrnk} and \eqref{genmrr} imply 
\begin{equation}\label{expliamr}
\we=\sum_{j=0}^{j=k}(-1)^j \binom{n+1}{j} \big(m(k-j)+r\big)^n.
\end{equation}

\noindent
For $m \geq r$, the grammar of Hao $G$ can give us ideas about a structure that can interpret the numbers $\big(A_{m,r}(n,k)\big)_{n,k \geq 0}$. For example, let $m=3$ and $r=2$. The grammar is then
\begin{align*}
G = \{& u \to u v^3  \\ 
& v \to u^3 v \},
\end{align*}
and the associated differential operator is
\[
\opd{G}= u v^3 \frac{\partial}{\partial u} + u^3 v \frac{\partial}{\partial v}.
\]

\begin{center}
\[
\opd{G}^0uv^2 = uv^2
\]

\begin{tikzpicture}[scale=1,
	every node/.style={circle, draw, minimum size=4mm, inner sep=1pt, font=\small},
	box/.style={draw, rectangle, minimum width=4cm, minimum height=2cm}
	]

	\node[box, anchor=south west] (b1) at (0,0) {};
	
	\node[fill=white!70] (x3a) at (0.5,0.6) {$u$};
	\node[fill=blue!60] (x4a) at (1.9,0.6) {$v$};
	\node[fill=blue!60] (x5a) at (3.3,0.6) {$v$};
	
\end{tikzpicture}
\captionof{figure}{$\opd{G}^n(u^{m-r}v^r)$-structure for $n=0$}\label{bnamrzer}

\end{center}
\begin{center}
\[
\opd{G}uv^2 = uv^5 + 2u^4v^2
\]

\begin{tikzpicture}[scale=1,
	every node/.style={circle, draw, minimum size=4mm, inner sep=1pt, font=\small},
	box/.style={draw, rectangle, minimum width=4cm, minimum height=2cm}
	]
	
	% === Première boîte ===
	\node[box, anchor=south west] (b1) at (0,0) {};
	\node[fill=darkgray] (1a) at (1.05,0.6) {1};
	\node[fill=white!70] (x1a) at (0.3,1.2) {$u$};
	\node[fill=blue!60] (x2a) at (0.8,1.2) {$v$};
	\node[fill=blue!60] (y1a) at (1.3,1.2) {$v$};
	\node[fill=blue!60] (y2a) at (1.8,1.2) {$v$};
	\draw (1a)--(x1a);
	\draw (1a)--(x2a);
	\draw (1a)--(y1a); \draw (1a)--(y2a);
	\node[fill=blue!60] (x4a) at (2.7,0.6) {$v$};
	\node[fill=blue!60] (x5a) at (3.5,0.6) {$v$};
	
	% === Deuxième boîte ===
	\node[box, anchor=south west] (b2) at (4.3,0) {};
	\node[fill=darkgray] (1b) at (6.25,0.6) {1};
	\node[fill=white!70] (x1b) at (5.5,1.2) {$u$};
	\node[fill=white!70] (y4b) at (6,1.2) {$u$};
	\node[fill=white!70] (y2b) at (6.5,1.2) {$u$};
	\node[fill=blue!60] (y3b) at (7,1.2) {$v$};
	\node[fill=white] (y1b) at (4.9,0.6) {$u$};
	\node[fill=blue!60] (x3b) at (7.6,0.6) {$v$};
	\draw (1b)--(x1b); \draw (1b)--(y4b); \draw (1b)--(y2b);  \draw (1b)--(y3b);
	
	% === Troisième boîte ===
	\node[box, anchor=south west] (b3) at (8.6,0) {};
	\node[fill=blue!60] (x0c) at (10.2,0.6) {$v$};
	\node[fill=white!70] (x1c) at (10.75,1.2) {$u$};
	\node[fill=white!70] (x2c) at (11.25,1.2) {$u$};
	\node[fill=white!70] (x3c) at (11.75,1.2) {$u$};
	\node[fill=blue!60] (y2c) at (12.25,1.2) {$v$};
	\node[fill=white] (y1c) at (9,0.6) {$u$};
	\node[fill=darkgray] (1c) at (11.5,0.6) {$1$};
	\draw (1c)--(x1c); \draw (1c)--(x2c); \draw (1c)--(x3c);\draw (1c)--(y2c);

\end{tikzpicture}
\captionof{figure}{$\opd{G}^n(u^{m-r}v^r)$-structures for $n=1$}\label{bnamrun}
\end{center}

\begin{center}

\[
\opd{G}^2uv^2 = uv^8+13u^4v^5 + 4u^7v^2
\]

\begin{tikzpicture}[scale=1,
	every node/.style={circle, draw, minimum size=4mm, inner sep=1pt, font=\small},
	box/.style={draw, rectangle, minimum width=4.25cm, minimum height=2.4cm}
	]
	
	% === Première ligne ===
	\node[box, anchor=south west] (b1) at (0,10) {};
	\node[fill=darkgray] (1a) at (1.8,10.6) {1};
	
	\node[fill=white!70] (xx1a) at (0.3,11.9) {$u$};
	\node[fill=blue!60] (xx2a) at (0.8,11.9) {$v$};
	\node[fill=blue!60] (xy1a) at (1.3,11.9) {$v$};
	\node[fill=blue!60] (xy2a) at (1.8,11.9) {$v$};
	
	\node[fill=darkgray!82] (x1a) at (1.05,11.2) {$2$};
	\node[fill=blue!60] (x2a) at (1.55,11.2) {$v$};
	\node[fill=blue!60] (y1a) at (2.05,11.2) {$v$};
	\node[fill=blue!60] (y2a) at (2.55,11.2) {$v$};
	\draw (1a)--(x1a);
	\draw (1a)--(x2a);
	\draw (1a)--(y1a); \draw (1a)--(y2a);
	\node[fill=blue!60] (x4a) at (3.25,10.6) {$v$};
	\node[fill=blue!60] (x5a) at (3.85,10.6) {$v$};
	
	\draw (x1a)--(xx1a);
	\draw (x1a)--(xx2a);
	\draw (x1a)--(xy1a); \draw (x1a)--(xy2a);

	\node[box, anchor=south west] (b2) at (4.4,10) {};
	\node[fill=darkgray] (11a) at (6.2,10.6) {1};
	
	\node[fill=white!70] (1xx1a) at (5.1,11.9) {$u$};
	\node[fill=white!70] (1xx2a) at (5.6,11.9) {$u$};
	\node[fill=white] (1xy1a) at (6.1,11.9) {$u$};
	\node[fill=blue!60] (1xy2a) at (6.6,11.9) {$v$};
	
	\node[fill=white!70] (1x1a) at (5.45,11.2) {$u$};
	\node[fill=darkgray!82] (1x2a) at (5.95,11.2) {$2$};
	\node[fill=blue!60] (1y1a) at (6.55,11.2) {$v$};
	\node[fill=blue!60] (1y2a) at (7.05,11.2) {$v$};
	\draw (11a)--(1x1a);
	\draw (11a)--(1x2a);
	\draw (11a)--(1y1a); \draw (11a)--(1y2a);
	\node[fill=blue!60] (1x4a) at (7.65,10.6) {$v$};
	\node[fill=blue!60] (1x5a) at (8.25,10.6) {$v$};
	
	\draw (1x2a)--(1xx1a);
	\draw (1x2a)--(1xx2a);
	\draw (1x2a)--(1xy1a); \draw (1x2a)--(1xy2a);

	\node[box, anchor=south west] (b3) at (8.8,10) {};
	\node[fill=darkgray] (11a) at (10.6,10.6) {1};
	
	\node[fill=white!70] (1xx1a) at (10,11.9) {$u$};
	\node[fill=white!70] (1xx2a) at (10.5,11.9) {$u$};
	\node[fill=white] (1xy1a) at (11,11.9) {$u$};
	\node[fill=blue!60] (1xy2a) at (11.5,11.9) {$v$};
	
	\node[fill=white!70] (1x1a) at (9.85,11.2) {$u$};
	\node[fill=blue!60] (1x2a) at (10.35,11.2) {$v$};
	\node[fill=darkgray!82] (1y1a) at (10.95,11.2) {$2$};
	\node[fill=blue!60] (1y2a) at (11.45,11.2) {$v$};
	\draw (11a)--(1x1a);
	\draw (11a)--(1x2a);
	\draw (11a)--(1y1a); \draw (11a)--(1y2a);
	\node[fill=blue!60] (1x4a) at (12.05,10.6) {$v$};
	\node[fill=blue!60] (1x5a) at (12.65,10.6) {$v$};
	
	\draw (1y1a)--(1xx1a);
	\draw (1y1a)--(1xx2a);
	\draw (1y1a)--(1xy1a); \draw (1y1a)--(1xy2a);

	\node[box, anchor=south west] (b4) at (13.2,10) {};
	\node[fill=darkgray] (11a) at (15,10.6) {1};
	
	\node[fill=white!70] (1xx1a) at (14.9,11.9) {$u$};
	\node[fill=white!70] (1xx2a) at (15.4,11.9) {$u$};
	\node[fill=white] (1xy1a) at (15.9,11.9) {$u$};
	\node[fill=blue!60] (1xy2a) at (16.4,11.9) {$v$};
	
	\node[fill=white!70] (1x1a) at (14.25,11.2) {$u$};
	\node[fill=blue!60] (1y2a) at (14.75,11.2) {$v$};
	\node[fill=blue!60] (1y1a) at (15.35,11.2) {$v$};
	\node[fill=darkgray!82] (1x2a) at (15.85,11.2) {$2$};
	\draw (11a)--(1x1a);
	\draw (11a)--(1x2a);
	\draw (11a)--(1y1a); \draw (11a)--(1y2a);
	\node[fill=blue!60] (1x4a) at (16.45,10.6) {$v$};
	\node[fill=blue!60] (1x5a) at (17.05,10.6) {$v$};
	
	\draw (1x2a)--(1xx1a);
	\draw (1x2a)--(1xx2a);
	\draw (1x2a)--(1xy1a); 
	\draw (1x2a)--(1xy2a);

	% === Deuxieme ligne ===
	\node[box, anchor=south west] (c1) at (0,7.5) {};
	\node[fill=darkgray] (1a) at (0.93,8.1) {1};
	
	\node[fill=white!70] (xx1a) at (2.15,8.7) {$u$};
	\node[fill=white!60] (xx2a) at (2.6,8.7) {$u$};
	\node[fill=white!60] (xy1a) at (3.05,8.7) {$u$};
	\node[fill=blue!60] (xy2a) at (3.5,8.7) {$v$};
	
	\node[fill=white] (x4a) at (0.25,8.7) {$u$};
	\node[fill=blue!60] (x2a) at (0.7,8.7) {$v$};
	\node[fill=blue!60] (y1a) at (1.15,8.7) {$v$};
	\node[fill=blue!60] (y2a) at (1.6,8.7) {$v$};
	\draw (1a)--(x4a);
	\draw (1a)--(x2a);
	\draw (1a)--(y1a); \draw (1a)--(y2a);
	\node[fill=darkgray!82] (x1a) at (2.83,8.1) {$2$};
	\node[fill=blue!60] (x5a) at (3.85,8.1) {$v$};
	
	\draw (x1a)--(xx1a);
	\draw (x1a)--(xx2a);
	\draw (x1a)--(xy1a); \draw (x1a)--(xy2a);

	\node[box, anchor=south west] (c2) at (4.4,7.5) {};
	\node[fill=darkgray] (1a) at (5.33,8.1) {1};
	
	\node[fill=white!70] (xx1a) at (6.95,8.7) {$u$};
	\node[fill=white!60] (xx2a) at (7.4,8.7) {$u$};
	\node[fill=white!60] (xy1a) at (7.95,8.7) {$u$};
	\node[fill=blue!60] (xy2a) at (8.4,8.7) {$v$};
	
	\node[fill=white] (x4a) at (4.65,8.7) {$u$};
	\node[fill=blue!60] (x2a) at (5.1,8.7) {$v$};
	\node[fill=blue!60] (y1a) at (5.55,8.7) {$v$};
	\node[fill=blue!60] (y2a) at (6,8.7) {$v$};
	\draw (1a)--(x4a);
	\draw (1a)--(x2a);
	\draw (1a)--(y1a); \draw (1a)--(y2a);
	\node[fill=blue!60] (x5a) at (6.43,8.1) {$v$};
	\node[fill=darkgray!82] (x1a) at (7.63,8.1) {$2$};
	
	\draw (x1a)--(xx1a);
	\draw (x1a)--(xx2a);
	\draw (x1a)--(xy1a); \draw (x1a)--(xy2a);

	\node[box, anchor=south west] (c3) at (8.8,7.5) {};
	\node[fill=darkgray] (1b) at (11.45,8.1) {1};
	\node[fill=white!70] (x1b) at (10.9,8.7) {$u$};
	\node[fill=white!70] (y4b) at (11.35,8.7) {$u$};
	\node[fill=white!70] (y2b) at (11.8,8.7) {$u$};
	\node[fill=blue!60] (y3b) at (12.25,8.7) {$v$};
	\node[fill=darkgray!82] (x1a) at (9.7,8.1) {$2$};
	\node[fill=blue!60] (x3b) at (12.75,8.1) {$v$};
	\draw (1b)--(x1b);
	\draw (1b)--(y4b);
	\draw (1b)--(y2b);
	\draw (1b)--(y3b);
	\node[fill=white!70] (xx1a) at (9.05,8.7) {$u$};
	\node[fill=blue!60] (xx2a) at (9.5,8.7) {$v$};
	\node[fill=blue!60] (xy1a) at (9.95,8.7) {$v$};
	\node[fill=blue!60] (xy2a) at (10.4,8.7) {$v$};
	
	\draw (x1a)--(xx1a);
	\draw (x1a)--(xx2a);
	\draw (x1a)--(xy1a); \draw (x1a)--(xy2a);

	\node[box, anchor=south west] (c4) at (13.2,7.5) {};
	\node[fill=darkgray] (1b) at (15.15,8.1) {1};
	\node[fill=darkgray!82] (x1a) at (14.4,8.7) {$2$};
	\node[fill=white!70] (y4b) at (14.9,8.7) {$u$};
	\node[fill=white!70] (y2b) at (15.4,8.7) {$u$};
	\node[fill=blue!60] (y3b) at (15.9,8.7) {$v$};
	\node[fill=white] (y1b) at (13.7,8.1) {$u$};
	\node[fill=blue!60] (x3b) at (17,8.1) {$u$};
	\draw (1b)--(x1a);
	\draw (1b)--(y4b);
	\draw (1b)--(y2b);
	\draw (1b)--(y3b);
	\node[fill=white!70] (xx1a) at (13.6, 9.4) {$u$};
	\node[fill=blue!60] (xx2a) at (14.1, 9.4) {$v$};
	\node[fill=blue!60] (xy1a) at (14.6, 9.4) {$v$};
	\node[fill=blue!60] (xy2a) at (15.1, 9.4) {$v$};
	\draw (x1a)--(xx1a);
	\draw (x1a)--(xx2a);
	\draw (x1a)--(xy1a); \draw (x1a)--(xy2a);

	%%% Troiseime ligne
	
	\node[box, anchor=south west] (c5) at (0,5) {};
	\node[fill=darkgray] (1b) at (1.95,5.6) {1};
	\node[fill=white!70] (y4b) at (1.2,6.2) {$u$};
	\node[fill=darkgray!82] (x1a) at (1.7,6.2) {$2$};
	\node[fill=white!70] (y2b) at (2.2,6.2) {$u$};
	\node[fill=blue!60] (y3b) at (2.7,6.2) {$v$};
	\node[fill=white] (y1b) at (0.5,5.6) {$u$};
	\node[fill=blue!60] (x3b) at (3.8,5.6) {$v$};
	\draw (1b)--(x1a);
	\draw (1b)--(y4b);
	\draw (1b)--(y2b);
	\draw (1b)--(y3b);
	\node[fill=white!70] (xx1a) at (0.8,6.9) {$u$};
	\node[fill=blue!60] (xx2a) at (1.3,6.9) {$v$};
	\node[fill=blue!60] (xy1a) at (1.8,6.9) {$v$};
	\node[fill=blue!60] (xy2a) at (2.3,6.9) {$v$};
	\draw (x1a)--(xx1a);
	\draw (x1a)--(xx2a);
	\draw (x1a)--(xy1a); 
	\draw (x1a)--(xy2a);

	\node[box, anchor=south west] (c6) at (4.4,5) {};
	\node[fill=darkgray] (1b) at (6.35,5.6) {1};
	\node[fill=white!70] (y2b) at (5.6,6.2) {$u$};
	\node[fill=white!70] (y4b) at (6.1,6.2) {$u$};
	\node[fill=darkgray!82] (x1a) at (6.6,6.2) {$2$};
	\node[fill=blue!60] (y3b) at (7.1,6.2) {$v$};
	\node[fill=white] (y1b) at (4.9,5.6) {$u$};
	\node[fill=blue!60] (x3b) at (8.2,5.6) {$v$};
	\draw (1b)--(x1a);
	\draw (1b)--(y4b);
	\draw (1b)--(y2b);
	\draw (1b)--(y3b);
	\node[fill=white!70] (xx1a) at (5.8,6.9) {$u$};
	\node[fill=blue!60] (xx2a) at (6.3,6.9) {$v$};
	\node[fill=blue!60] (xy1a) at (6.8,6.9) {$v$};
	\node[fill=blue!60] (xy2a) at (7.3,6.9) {$v$};
	\draw (x1a)--(xx1a);
	\draw (x1a)--(xx2a);
	\draw (x1a)--(xy1a); 
	\draw (x1a)--(xy2a);

	\node[box, anchor=south west] (c7) at (8.8,5) {};
	\node[fill=darkgray] (1b) at (10.75,5.6) {1};
	\node[fill=white!70] (y3b) at (10,6.2) {$u$};
	\node[fill=white!70] (y4b) at (10.5,6.2) {$u$};
	\node[fill=white!70] (y2b) at (11,6.2) {$u$};
	\node[fill=darkgray!82] (x1a) at (11.5,6.2) {$2$};
	\node[fill=white] (y1b) at (9.3,5.6) {$u$};
	\node[fill=blue!60] (x3b) at (12.6,5.6) {$v$};
	\draw (1b)--(x1a);
	\draw (1b)--(y4b);
	\draw (1b)--(y2b);
	\draw (1b)--(y3b);
	\node[fill=white!70] (xx1a) at (10.6,6.9) {$u$};
	\node[fill=white!70] (xx2a) at (11.1,6.9) {$u$};
	\node[fill=white] (xy1a) at (11.6,6.9) {$u$};
	\node[fill=blue!60] (xy2a) at (12.1,6.9) {$v$};
	\draw (x1a)--(xx1a);
	\draw (x1a)--(xx2a);
	\draw (x1a)--(xy1a); 
	\draw (x1a)--(xy2a);

	\node[box, anchor=south west] (c8) at (13.2,5.0) {};
	\node[fill=darkgray] (1b) at (14.5,5.6) {1};
	\node[fill=white!70] (x1b) at (13.65,6.2) {$u$};
	\node[fill=white!70] (y4b) at (14.1,6.2) {$u$};
	\node[fill=white!70] (y2b) at (14.55,6.2) {$u$};
	\node[fill=blue!60] (y3b) at (15,6.2) {$v$};
	\node[fill=white] (y1b) at (13.5,5.6) {$u$};
	\node[fill=darkgray!82] (31b) at (16.5,5.6) {$2$};
	\draw (1b)--(x1b);
	\draw (1b)--(y4b);
	\draw (1b)--(y2b);
	\draw (1b)--(y3b);
	
	\node[fill=white!70] (2x1b) at (15.75,6.2) {$u$};
	\node[fill=white!70] (2y4b) at (16.2,6.2) {$u$};
	\node[fill=white!70] (2y2b) at (16.7,6.2) {$u$};
	\node[fill=blue!60] (2y3b) at (17.2,6.2) {$v$};
	\draw (31b)--(2x1b);
	\draw (31b)--(2y4b);
	\draw (31b)--(2y2b);
	\draw (31b)--(2y3b);

	%%% Quatrieme ligne
	
	\node[box, anchor=south west] (d1) at (0,2.5) {};
	\node[fill=darkgray!82] (x0c) at (1,3.1) {$2$};
	\node[fill=white!70] (x1c) at (2.55,3.7) {$u$};
	\node[fill=white!70] (x2c) at (3,3.7) {$u$};
	\node[fill=white!70] (x3c) at (3.45,3.7) {$u$};
	\node[fill=blue!60] (y2c) at (3.95,3.7) {$v$};
	\node[fill=blue!60] (y1c) at (2.15,3.1) {$v$};
	\node[fill=darkgray] (1c) at (3.15,3.1) {$1$};
	\draw (1c)--(x1c); 
	\draw (1c)--(x2c); 
	\draw (1c)--(x3c);
	\draw (1c)--(y2c);
	
	\node[fill=white!70] (xx1c) at (0.25,3.7) {$u$};
	\node[fill=blue!60] (xx2c) at (0.75,3.7) {$v$};
	\node[fill=blue!60] (xx3c) at (1.2,3.7) {$v$};
	\node[fill=blue!60] (xy2c) at (1.65,3.7) {$v$};
	\draw (x0c)--(xx1c); 
	\draw (x0c)--(xx2c); 
	\draw (x0c)--(xx3c);
	\draw (x0c)--(xy2c);

	\node[box, anchor=south west] (d2) at (4.4,2.5) {};
	\node[fill=darkgray!82] (x0c) at (5.7,3.1) {$2$};
	\node[fill=white!70] (x1c) at (6.95,3.7) {$u$};
	\node[fill=white!70] (x2c) at (7.4,3.7) {$u$};
	\node[fill=white!70] (x3c) at (7.85,3.7) {$u$};
	\node[fill=blue!60] (y2c) at (8.35,3.7) {$v$};
	\node[fill=white] (y1c) at (4.65,3.1) {$u$};
	\node[fill=darkgray] (1c) at (7.55,3.1) {$1$};
	\draw (1c)--(x1c); 
	\draw (1c)--(x2c); 
	\draw (1c)--(x3c);
	\draw (1c)--(y2c);
	
	\node[fill=white!70] (xx1c) at (4.8,3.7) {$u$};
	\node[fill=white!70] (xx2c) at (5.35,3.7) {$u$};
	\node[fill=white!70] (xx3c) at (5.8,3.7) {$u$};
	\node[fill=blue!60] (xy2c) at (6.35,3.7) {$v$};
	\draw (x0c)--(xx1c); 
	\draw (x0c)--(xx2c); 
	\draw (x0c)--(xx3c);
	\draw (x0c)--(xy2c);

	\node[box, anchor=south west] (d3) at (8.8,2.5) {};
	\node[fill=blue!60] (x0c) at (10.1,3.1) {$v$};
	\node[fill=darkgray!82] (x1a) at (11.25,3.7) {$2$};
	\node[fill=white!70] (x2c) at (11.75,3.7) {$u$};
	\node[fill=white!70] (x3c) at (12.25,3.7) {$u$};
	\node[fill=blue!60] (y2c) at (12.75,3.7) {$v$};
	\node[fill=white] (y1c) at (9.2,3.1) {$u$};
	\node[fill=darkgray] (1c) at (12.0,3.1) {$1$};
	\draw (1c)--(x1a); 
	\draw (1c)--(x2c); 
	\draw (1c)--(x3c);
	\draw (1c)--(y2c);
	
	\node[fill=white!70] (xx1a) at (10.6,4.4) {$u$};
	\node[fill=blue!60] (xx2a) at (11.1,4.4) {$v$};
	\node[fill=blue!60] (xy1a) at (11.6,4.4) {$v$};
	\node[fill=blue!60] (xy2a) at (12.1,4.4) {$v$};
	\draw (x1a)--(xx1a);
	\draw (x1a)--(xx2a);
	\draw (x1a)--(xy1a); 
	\draw (x1a)--(xy2a);

	\node[box, anchor=south west] (d4) at (13.2,2.5) {};
	\node[fill=blue!60] (x0c) at (14.5,3.1) {$v$};
	\node[fill=darkgray!82] (x1a) at (16.15,3.7) {$2$};
	\node[fill=white!70] (x2c) at (15.65,3.7) {$u$};
	\node[fill=white!70] (x3c) at (16.65,3.7) {$u$};
	\node[fill=blue!60] (y2c) at (17.15,3.7) {$v$};
	\node[fill=white] (y1c) at (13.6,3.1) {$u$};
	\node[fill=darkgray] (1c) at (16.4,3.1) {$1$};
	\draw (1c)--(x1a); 
	\draw (1c)--(x2c); 
	\draw (1c)--(x3c);
	\draw (1c)--(y2c);
	
	\node[fill=white!70] (xx1a) at (15.2,4.4) {$u$};
	\node[fill=blue!60] (xx2a) at (15.7,4.4) {$v$};
	\node[fill=blue!60] (xy1a) at (16.2,4.4) {$v$};
	\node[fill=blue!60] (xy2a) at (16.7,4.4) {$v$};
	\draw (x1a)--(xx1a);
	\draw (x1a)--(xx2a);
	\draw (x1a)--(xy1a); 
	\draw (x1a)--(xy2a);
	
	%%% Derniere ligne
	\node[box, anchor=south west] (e1) at (4.4,0) {};
	\node[fill=blue!60] (x0c) at (5.7,0.6) {$v$};
	\node[fill=darkgray!82] (x1a) at  (7.85,1.2) {$2$};
	\node[fill=white!70] (x2c) at (7.35,1.2) {$u$};
	\node[fill=white!70] (x3c) at (6.85,1.2) {$u$};
	\node[fill=blue!60] (y2c) at (8.35,1.2) {$v$};
	\node[fill=white] (y1c) at (4.8,0.6) {$u$};
	\node[fill=darkgray] (1c) at (7.6,0.6) {$1$};
	\draw (1c)--(x1a); 
	\draw (1c)--(x2c); 
	\draw (1c)--(x3c);
	\draw (1c)--(y2c);
	
	\node[fill=white!70] (xx1a) at (6.9,1.9) {$u$};
	\node[fill=blue!60] (xx2a) at (7.4,1.9) {$v$};
	\node[fill=blue!60] (xy1a) at (7.9,1.9) {$v$};
	\node[fill=blue!60] (xy2a) at (8.4,1.9) {$v$};
	\draw (x1a)--(xx1a);
	\draw (x1a)--(xx2a);
	\draw (x1a)--(xy1a); 
	\draw (x1a)--(xy2a);

	\node[box, anchor=south west] (e2) at (8.8,0) {};
	\node[fill=blue!60] (x0c) at (10.1,0.6) {$v$};
	\node[fill=darkgray!82] (x1a) at (12.15,1.2) {$2$};
	\node[fill=white!70] (x2c) at (11.15,1.2) {$u$};
	\node[fill=white!70] (x3c) at (10.65,1.2) {$u$};
	\node[fill=white!70] (y2c) at  (11.65,1.2) {$u$};
	\node[fill=white] (y1c) at (9.2,0.6) {$u$};
	\node[fill=darkgray] (1c) at (11.4,0.6) {$1$};
	\draw (1c)--(x1a); 
	\draw (1c)--(x2c); 
	\draw (1c)--(x3c);
	\draw (1c)--(y2c);
	
	\node[fill=white!70] (xx1a) at (11.3,1.9) {$u$};
	\node[fill=white!70] (xx2a) at (11.8,1.9) {$u$};
	\node[fill=white] (xy1a) at (12.3,1.9) {$u$};
	\node[fill=blue!60] (xy2a) at (12.8,1.9) {$v$};
	\draw (x1a)--(xx1a);
	\draw (x1a)--(xx2a);
	\draw (x1a)--(xy1a); 
	\draw (x1a)--(xy2a);
	
\end{tikzpicture}
\captionof{figure}{$\opd{G}^n(u^{m-r}v^r)$-structures for $n=2$}\label{bnamrd}
\end{center}
\noindent
From Proposition \ref{phaores}, $A_{m,r}(n,k)$ is the number of $\opd{G}^n(u^{m-r}v^r)$-structures having $km+r$ $v$-leaves (blue in our Figures \ref{bnamrzer}, \ref{bnamrun}, \ref{bnamrd}).
Moreover, we notice that a $\opd{G}^n(u^{m-r}v^r)$-structure has $f_n = m(n+1)$ leaves, and if $a_n$ denotes the total number of $\opd{G}^n(u^{m-r}v^r)$-structures, then
$a_0 = 1$ and for $n \geq 1$, $a_n = a_{n-1} \times f_{n-1} = n m a_{n-1}$. Therefore,
\[
a_n = n! \, m^n,
\]
which reaffirms relation \eqref{sumamr}.

\begin{example}\label{witn2esp}
Consider the sequence $\big( F(n,k)\big)_{n,k\ge 0}$ satisfying \eqref{recu}, with $b_{n,k}=1$ and $a_{n,k}=a_0+a_1 k$, $a_1\neq 0$. The system of differential equations associated with the grammar $G$ in \eqref{gram2} is therefore
\begin{align*}
	\left\{
	\begin{array}{cl}
		U' &= U V^{a_1}, \quad U(0) = u,  \\
		V' &= V, \quad\quad\quad V(0) = v.
	\end{array}
	\right.
\end{align*}
\end{example}
\noindent
The analytic solution is
\begin{align}\label{sol22}
\left\{
\begin{array}{cl}
	U(t) &= u \exp{\Bigg(\frac{v^{a_1}}{a_1}\big(e^{a_1 t}-1\big)\Bigg)},  \\
	V(t) &= v e^t.
\end{array}
\right.
\end{align}
\noindent
Recall that the Bell polynomial is defined by
\[
B_n(\lambda) = \sum_{k=0}^{n} S(n,k) \lambda^k.
\]
The classical fact concerning the Stirling numbers then gives
\begin{equation}\label{bels}
\sum_{n\ge 0} B_n(\lambda) \frac{t^n}{n!} = \exp{\Big(\lambda (e^t - 1)\Big)}.
\end{equation}
\noindent
Equations \eqref{sol22} and \eqref{bels} imply
\begin{equation}\label{eqsol2}
U(t) \big(V(t)\big)^{a_0} = u v^{a_0} \sum_{n \ge 0} \left( \sum_{k=0}^{n} \binom{n}{k} a_1^k a_0^{\, n-k} B_k \Big( \frac{v^{a_1}}{a_1} \Big) \right) \frac{t^n}{n!}.
\end{equation}
\noindent
On the other hand, Proposition \ref{phaores} states that
\begin{equation}\label{eqphao}
\opd{G}^n(uv^{a_0}) = \sum_{k=0}^{n} F(n,k) \, u v^{a_1 k + a_0}.
\end{equation}
\noindent
Now, combining these facts with equation \eqref{algG} and Proposition \ref{chendumrabez}, we obtain
\begin{equation}\label{touchardtnk}
\sum_{k\ge 0} F(n,k) \alpha^k = \sum_{k\ge 0} \binom{n}{k} a_1^k a_0^{\, n-k} B_k\Big( \frac{\alpha}{a_1} \Big),
\end{equation}
by substituting $\alpha = v^{a_1}$.\newline
\noindent
Returning to the expression for $B_k(x)$, we have
\begin{equation}\label{tnka2zero}
F(n,k) = \sum_{j=k}^{n} \binom{n}{j} a_0^{\, n-j} a_1^{\, j-k} S(j,k).
\end{equation}
\noindent
Next, consider $\opd{G}^n(uv^{a_0})$, with
\[
\opd{G} = u v^{a_1} \frac{\partial}{\partial u} + v \frac{\partial}{\partial v}.
\]
We now look at the example where $a_0 = 2$ and $a_1 = 2$.

\begin{center}
\[
\opd{G}^0uv^2 = uv^2
\]
\begin{tikzpicture}[scale=1,
	every node/.style={circle, draw, minimum size=4mm, inner sep=1pt, font=\small},
	box/.style={draw, rectangle, minimum width=4cm, minimum height=2cm}
	]

	\node[box, anchor=south west] (b1) at (0,-5) {};         
	
	\node[fill=white!70] (x3a) at (0.5,-4.4) {$u$};
	\node[fill=blue!60] (x4a) at (1.9,-4.4) {$v$};
	\node[fill=blue!60] (x5a) at (3.3,-4.4) {$v$};
	
\end{tikzpicture}
\captionof{figure}{$\opd{G}^n(uv^{a_0})$-structure for $n=0$}\label{ca1str0}

\end{center}
\begin{center}
\[
\opd{G}uv^2 = uv^4 + 2uv^2
\]

\begin{tikzpicture}[scale=1,
	every node/.style={circle, draw, minimum size=4mm, inner sep=1pt, font=\small},
	box/.style={draw, rectangle, minimum width=4cm, minimum height=2cm}
	]
	
	% === Première boîte ===
	\node[box, anchor=south west] (b1) at (0,-5) {};
	\node[fill=darkgray] (1a) at (1.05,-4.4) {1};
	\node[fill=white!70] (x1a) at (0.3,-3.8) {$u$};
	%\node[fill=blue!60] (x2a) at (0.8,-3.8) {$v$};
	\node[fill=blue!60] (y1a) at (1.05,-3.8) {$v$};
	\node[fill=blue!60] (y2a) at (1.8,-3.8) {$v$};
	\draw (1a)--(x1a);
	%\draw (1a)--(x2a);
	\draw (1a)--(y1a); 
	\draw (1a)--(y2a);
	\node[fill=blue!60] (x4a) at (2.7,-4.4) {$v$};
	\node[fill=blue!60] (x5a) at (3.5,-4.4) {$v$};
	
	% === Deuxième boîte ===
	\node[box, anchor=south west] (b2) at (4.3,-5) {};
	\node[fill=darkgray] (1b) at (6.25,-4.4) {1};
	\node[fill=blue!60] (y2b) at (6.25,-3.8) {$v$};
	\node[fill=white] (y1b) at (4.9,-4.4) {$u$};
	\node[fill=blue!60] (x3b) at (7.6,-4.4) {$v$};
	\draw (1b)--(y2b);

	% === Troisième boîte ===
	\node[box, anchor=south west] (b3) at (8.6,-5) {};
	\node[fill=blue!60] (x0c) at (10.2,-4.4) {$v$};
	\node[fill=blue!60] (x2c) at (11.5,-3.8) {$v$};
	\node[fill=white] (y1c) at (9,-4.4) {$u$};
	\node[fill=darkgray] (1c) at (11.5,-4.4) {$1$};
	\draw (1c)--(x2c); 
\end{tikzpicture}

\captionof{figure}{$\opd{G}^n(uv^{a_0})$-structures for $n=1$}\label{ca1str1}
\end{center}

\begin{center}
\[
\opd{G}^2uv^2 = uv^6+4uv^4 + 2uv^4 + 4uv^2
\]

\begin{tikzpicture}[scale=1,
	every node/.style={circle, draw, minimum size=4mm, inner sep=1pt, font=\small},
	box/.style={draw, rectangle, minimum width=4.25cm, minimum height=2.4cm}
	]
	
	% === Première ligne ===
	\node[box, anchor=south west] (b1) at (0,5) {};
	\node[fill=darkgray] (1a) at (1.8,5.6) {1};
	
	\node[fill=white!70] (xx1a) at (0.3,6.9) {$u$};
	\node[fill=blue!60] (xx2a) at (1.05,6.9) {$v$};
	\node[fill=blue!60] (xy2a) at (1.8,6.9) {$v$};
	\node[fill=darkgray!82] (x1a) at (1.05,6.2) {$2$};
	\node[fill=blue!60] (x2a) at (1.8,6.2) {$v$};
	\node[fill=blue!60] (y2a) at (2.55,6.2) {$v$};
	\draw (1a)--(x1a);
	\draw (1a)--(x2a);
	\draw (1a)--(y2a);
	\node[fill=blue!60] (x4a) at (3.25,5.6) {$v$};
	\node[fill=blue!60] (x5a) at (3.85,5.6) {$v$};
	
	\draw (x1a)--(xx1a);
	\draw (x1a)--(xx2a); 
	\draw (x1a)--(xy2a);
	
	% 1 ERE LIGNE, 2 EME COLONNE
	\node[box, anchor=south west] (b2) at (4.4,5) {};
	\node[fill=darkgray] (11a) at (6.2,5.6) {1};
	\node[fill=blue!60] (1xx2a) at (6.2,6.9) {$v$};

	\node[fill=white!70] (1x1a) at (5.45,6.2) {$u$};
	\node[fill=darkgray!82] (1x2a) at (6.2,6.2) {$2$};
	
	\node[fill=blue!60] (1y2a) at (7.05,6.2) {$v$};
	\draw (11a)--(1x1a);
	\draw (11a)--(1x2a);
	
	\draw (11a)--(1y2a);
	\node[fill=blue!60] (1x4a) at (7.65,5.6) {$v$};
	\node[fill=blue!60] (1x5a) at (8.25,5.6) {$v$};

	\draw (1x2a)--(1xx2a);

	% 1 ERE LIGNE 3 EME COLONNE
	\node[box, anchor=south west] (b3) at (8.8,5) {};
	\node[fill=darkgray] (11a) at (10.6,5.6) {1};
	
	\node[fill=blue!60] (1xx1a) at (11.45,6.9) {$v$};

	\node[fill=white!70] (1x1a) at (9.85,6.2) {$u$};
	\node[fill=blue!60] (1x2a) at (10.6,6.2) {$v$};
	\node[fill=darkgray!82] (1y1a) at (11.45,6.2) {$2$};
	
	\draw (11a)--(1x1a);
	\draw (11a)--(1x2a);
	\draw (11a)--(1y1a);
	
	\node[fill=blue!60] (1x4a) at (12.05,5.6) {$v$};
	\node[fill=blue!60] (1x5a) at (12.65,5.6) {$v$};
	\draw (1y1a)--(1xx1a);

	% 1 ERE LIGNE 4 EME COLONNE
	\node[box, anchor=south west] (b4) at (13.2,5) {};
	\node[fill=darkgray] (11a) at (15,5.6) {1};
	
	\node[fill=blue!60] (1xx1a) at (16.45,6.2) {$v$};
	
	\node[fill=white!70] (1x1a) at (14.25,6.2) {$u$};
	\node[fill=blue!60] (1y2a) at (14.75,6.2) {$v$};
	\node[fill=blue!60] (1y1a) at (15.5,6.2) {$v$};
	\draw (11a)--(1x1a);
	\draw (11a)--(1y1a); 
	\draw (11a)--(1y2a);
	\node[fill=darkgray!82] (1x4a) at (16.45,5.6) {$2$};
	\node[fill=blue!60] (1x5a) at (17.05,5.6) {$v$};
	
	\draw (1x4a)--(1xx1a);

	% === Deuxieme ligne ===
	% 2 EME LIGNE, 1 ERE COLONNE
	
	\node[box, anchor=south west] (c1) at (0,2.5) {};
	\node[fill=darkgray] (1a) at (0.93,3.1) {1};
	\node[fill=blue!60] (xx2a) at (3.85,3.7) {$v$};

	\node[fill=white] (x4a) at (0.25,3.7) {$u$};
	\node[fill=blue!60] (x2a) at (0.93,3.7) {$v$};
	\node[fill=blue!60] (y2a) at (1.6,3.7) {$v$};
	\draw (1a)--(x4a);
	\draw (1a)--(x2a);
	%\draw (1a)--(y1a); 
	\draw (1a)--(y2a);
	\node[fill=darkgray!82] (x1a) at (3.85,3.1) {$2$};
	\node[fill=blue!60] (x5a) at (2.83,3.1) {$v$};

	\draw (x1a)--(xx2a);

	% === Deuxieme ligne ===
	% 2 EME LIGNE, 2 ERE COLONNE
	\node[box, anchor=south west] (c2) at (4.4,2.5) {};
	\node[fill=darkgray!82] (1a) at (5.33,3.1) {2};
	
	\node[fill=blue!60] (xx1a) at (6.75,3.7){$v$};
	\node[fill=white] (x4a) at (4.65,3.7) {$u$};
	\node[fill=blue!60] (y1a) at (5.33,3.7) {$v$};
	\node[fill=blue!60] (y2a) at (6,3.7) {$v$};
	\draw (1a)--(x4a);
	
	\draw (1a)--(y1a); \draw (1a)--(y2a);
	\node[fill=darkgray] (x5a) at (6.75,3.1) {$1$};
	\node[fill=blue!60] (x1a) at (7.63,3.1) {$v$};
	
	\draw (x5a)--(xx1a);
	
	% === Deuxieme ligne ===
	% 2 EME LIGNE, 3 ERE COLONNE
	
	\node[box, anchor=south west] (c3) at (8.8,2.5) {};
	\node[fill=darkgray] (1b) at (11.45,3.1) {1};
	\node[fill=blue!60] (x1b) at (11.45,4.4) {$v$};
	\node[fill=darkgray!82] (y3b) at (11.45,3.7) {$2$};
	\node[fill=white!70] (x1a) at (9.7,3.1) {$u$};
	\node[fill=blue!60] (x3b) at (12.75,3.1) {$v$};
	\draw (y3b)--(x1b);
	\draw (1b)--(y3b);

	% === Deuxieme ligne ===
	% 2 EME LIGNE, 4 ERE COLONNE
	
	\node[box, anchor=south west] (c4) at (13.2,2.5) {};
	\node[fill=darkgray] (1b) at (15.15,3.1) {1};
	\node[fill=blue!60] (y4b) at (15.15,3.7) {$v$};
	\node[fill=blue!60] (y2b) at (17,3.7) {$v$};
	\node[fill=white] (y1b) at (13.7,3.1) {$u$};
	\node[fill=darkgray!82] (x3b) at (17,3.1) {$2$};
	\draw (x3b)--(y2b);
	\draw (1b)--(y4b);

	%%% Troiseime ligne
	% 3 EME LIGNE, 1 ERE COLONNE
	\node[box, anchor=south west] (c5) at (0,0) {};
	\node[fill=blue!60] (1b) at (2.75,0.6) {v};
	
	\node[fill=blue!60] (x1a) at (1.8,1.2) {$v$};
	\node[fill=white!70] (y2b) at (0.25,1.2) {$u$};
	\node[fill=blue!60] (y3b) at (1,1.2) {$v$};
	\node[fill=darkgray!82] (y1b) at (1,0.6) {$2$};
	\node[fill=darkgray] (x3b) at (3.8,0.6) {$1$};
	\draw (y1b)--(x1a);
	\draw (y1b)--(y3b);
	\draw (y1b)--(y2b);
	
	\node[fill=blue!60] (xx2a) at (3.8,1.2) {$v$};
	\draw (x3b)--(xx2a);

	%%% Troiseime ligne
	% 3 EME LIGNE, 2 EME COLONNE
	
	\node[box, anchor=south west] (c6) at (4.4,0) {};
	\node[fill=darkgray!80] (1b) at (6.35,0.6) {2};
	\node[fill=blue!60] (y2b) at (6.35,1.2) {$v$};
	\node[fill=blue!60] (y4b) at (8.2,1.2) {$v$};
	\node[fill=white] (y1b) at (4.9,0.6) {$u$};
	\node[fill=darkgray] (x3b) at (8.2,0.6) {$1$};
	
	\draw (x3b)--(y4b);
	\draw (1b)--(y2b);

	%%% Troiseime ligne
	% 3 EME LIGNE, 3 EME COLONNE
	
	\node[box, anchor=south west] (c7) at (8.8,0) {};
	\node[fill=blue!60] (1b) at (10.75,0.6) {v};
	\node[fill=darkgray!82] (y3b) at (12.6,1.2) {$2$};
	\node[fill=white] (y1b) at (9.3,0.6) {$u$};
	\node[fill=darkgray] (x3b) at (12.6,0.6) {$1$};
	\node[fill=blue!60] (xx1a) at (12.6,1.9) {$v$};
	\draw (x3b)--(y3b); \draw (xx1a)--(y3b);

	%%%%%%%%%%%%%%%%%%%%%%%%%%%%%%%%%%%%%%%%%%%%%%%%%%%%%%%%%%%%%%%%%%%%

\end{tikzpicture}
\captionof{figure}{$\opd{G}^n(uv^{a_0})$-structures for $n=2$}  \label{ca1str2}    
\end{center}
\noindent
From equation \eqref{eqphao}, $F(n,k)$ counts the number of $\opd{G}^n(uv^{a_0})$-structures having $a_1 k + a_0$ $v$-leaves (blue in our Figures \ref{ca1str0}, \ref{ca1str1}, \ref{ca1str2}).

\begin{example}\label{witn1esp}
Consider the sequence $\big( F(n,k) \big)_{n,k \ge 0}$ satisfying \eqref{recu}, with $b_{n,k}=1$ and $a_{n,k}=a_0 + a_2 n$, $a_2 \neq 0$. The system of differential equations associated with the grammar $G_2$ in \eqref{gram2} is therefore
\begin{align*}
	\left\{
	\begin{array}{cl}
		U' &= U V^{a_2}, \quad U(0) = u,  \\
		V' &= V^{a_2+1}, \quad V(0) = v.
	\end{array}
	\right.
\end{align*}
\end{example} \noindent
The analytic solution is
\begin{align}\label{sol233}
\left\{
\begin{array}{cl}
	U(t) &= \frac{u}{v} \, (v^{-a_2} - a_2 t)^{-\frac{1}{a_2}},  \\
	V(t) &= (v^{-a_2} - a_2 t)^{-\frac{1}{a_2}}.
\end{array}
\right.
\end{align}
\noindent
Thus, we have
\begin{equation*}
U(t) \big(V(t)\big)^{a_0 + a_2} = \frac{u}{v} \, (v^{-a_2} - a_2 t)^{-\alpha} 
= u v^{a_0 + a_2} \sum_{n \ge 0} \agenm{\alpha}{n} a_2^n v^{a_2 n} \frac{t^n}{n!},
\end{equation*}
where $\alpha = \frac{1 + a_0 + a_2}{a_2}$.
\newline
Again, as before, using Proposition \ref{phaores}, we deduce
\begin{equation*}
\sum_{n \ge 0} \sum_{k=0}^{n} F(n,k) \, u v^{a_0 + a_2 + a_2 n} \frac{t^n}{n!} 
= u v^{a_0 + a_2} \sum_{n \ge 0} \agenm{\alpha}{n} a_2^n v^{a_2 n} \frac{t^n}{n!}.
\end{equation*}

This equation implies that
\begin{equation*}
\sum_{k=0}^{n} F(n,k) = \agenm{\alpha}{n} a_2^n = \left( \frac{1 + a_0 + a_2}{a_2} \right)^{(\underline{n})} a_2^n.
\end{equation*}

\begin{remark}
Briefly, Proposition \ref{phaores} states that
\begin{align*}
	\opd{G}^n(uv^{a_0 + a_2}) = \sum_k T(n,k) \, u v^{a_2 n + a_2 + a_0},
\end{align*}
which does not allow us to distinguish the individual $T(n,k)$ according to the number of $v$-leaves; all $\opd{G}^n(uv^{a_0 + a_2})$-structures have the same number of leaves: one $u$-leaf and $a_2 n + a_2 + a_0$ $v$-leaves.
\end{remark}
\noindent
Therefore, it is necessary to introduce additional properties to determine the individual $T(n,k)$. We now briefly discuss a certain type of grammar: the type $(E)$ grammar, introduced in Randrianirina \cite{randrianirinahal-05091963}.

\begin{definition}[\cite{randrianirinahal-05091963}]
A type $(E)$ grammar is a grammar $G$ on $\{z\} \cup X$, with $z \notin X$, defined by
\begin{align*}
	G = \{ & z \to z g(\vec{x})  \\
	& x_i \to h_i(\vec{x}) : \quad x_i \in X \},
\end{align*}
where $g(\vec{x})$ is a polynomial in $C[X]$.
\end{definition}
\noindent
We associate to this grammar a system of differential equations. By solving it combinatorially, the $\mathcal{Z}$-structure is a species of structures as illustrated in Figure \ref{ZG-structure}.
\begin{figure}[ht]
\centering
\begin{tikzpicture}[scale=1, every node/.style={font=\small}]
	% Styles
	\tikzset{
		spine/.style={blue, thick},
		ptspine/.style={circle, fill=blue, inner sep=2.5pt},
		gbranch/.style={gray, thick, rounded corners=2pt},
		gslabel/.style={gray, font=\small},
		suspension/.style={fill=black, inner sep=0.9pt, circle}
	}
	
	% Colonne vertébrale oblique vers la gauche
	\coordinate (P1) at (0,-0.3);
	\coordinate (P2) at (-0.9,0.8);
	\coordinate (P3) at (-1.8,1.9);
	\coordinate (Pk) at (-3,3.3667);
	
	% Points blues
	\node[ptspine, label=left:{\textcolor{blue}{$i_1$=1}}] at (P1) (A1) {};
	\node[ptspine, label=left:{\textcolor{blue}{$i_2$}}] at (P2) (A2) {};
	\node[ptspine, label=left:{\textcolor{blue}{$i_3$}}] at (P3) (A3) {};
	\node[ptspine, label=left:{\textcolor{blue}{$i_k$}}] at (Pk) (Ak) {};
	
	% Relier la colonne vertébrale
	\draw[spine] (A1)--(A2)--(A3);
	\foreach \t in {0.4,0.55,0.7} {
		\fill[blue] ($(A3)!\t!(Ak)$) circle (1.3pt);
	}
	\draw[spine] (Ak)--($(A3)!0.95!(Ak)$);
	
	% G-structures (tiges longues + points de suspension)
	\foreach \pt/\name in {A1/1, A2/2, A3/3, Ak}{
		\draw[gbranch] (\pt) .. controls ($( \pt )+(0.99,0.8)$) .. ($( \pt )+(1.3,1.3)$);
		\draw[gbranch] (\pt) .. controls ($( \pt )+(1.2,0.3)$) .. ($( \pt )+(1.8,0.50)$)
		node[right, gslabel] {$\mathcal{G}-$structure};
		%coubes
		\draw[very thick] ($( \pt )+(1.5,0.2)$) .. controls ($( \pt )+(1.4,0.55)$) .. ($( \pt )+(0.85,0.95)$);
		% points de suspension entre les tiges
		\coordinate (s\name) at ($( \pt )+(1,0.4)$);
		\foreach \i in {0,1,2}{
			\fill[suspension] ($(s\name)+(-0.07*\i,0.09*\i)$) circle (1pt);
		}
	}
	
	% Niveau k : feuille z + G_k-structure
	\node[circle, draw=black, fill=white, minimum size=9pt, label=center:{$z$}] 
	at ($(Ak)+(-0.7,0.4)$) (leafz) {};
	\draw[thick] (Ak) -- (leafz);

	% Encadrement (box)
	\draw[rounded corners=6pt, thick] (-4.5,-0.8) rectangle (3.8,5.3);
	
\end{tikzpicture}
\caption{$\mathcal{Z}$-structure in terms of $\mathcal{G}$-structure.
}
\label{ZG-structure}
\end{figure}
\noindent Hence, $\mathcal{Z}$ is the set of $\mathcal{G}$-structures, with
\[
\mathcal{G} = \int g\big(\vec{\mathcal{X}}\big).
\]
\noindent
Now consider the blue line, which we call the backbone. For the language in Randrianirina \cite{randrianirinahal-05091963}, each point on the backbone is associated with a connected component, and the author associates the species $\mathcal{Z}$ to a sequence $\big( Z(n,k) \big)_{n,k \ge 0}$, where $Z(n,k)$ is the number of $\mathcal{Z}$-structures on $n$ points having $k$ connected components.
\\ \noindent
Now consider a sequence $\big( F(n,k) \big)_{n,k \ge 0}$ satisfying \eqref{recu} with $\coul{a} = a_2 n + a_1 k + a_0$, $\coul{b} = 1$, and the usual initial conditions. The grammar $G$ from \eqref{gram2} associated with this sequence is thus
\begin{align*}
G = \{ & u \rightarrow u v^{a_1 + a_2}; \\
& v \rightarrow v^{a_2 + 1}. \} 
\end{align*}
\noindent
This grammar is of type $(E)$, and we have the following important remark. 
\begin{prop}
Every $\opd{G}^n(uv^{a_0 + a_2})$-structure $s$ is canonically decomposed as $s = (s_1, s_2)$, where $s_1$ is a $\mathcal{U}$-structure and $s_2$ is a $\mathcal{V}^{a_0 + a_2}$-structure. Therefore, $F(n,k)$ is the number of $\opd{G}_2^n(uv^{a_0 + a_2})$-structures $s$ such that $s_1$ has $k$ connected components.
\end{prop}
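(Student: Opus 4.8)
The plan is to specialize the general dictionary to the grammar at hand and then verify, through a direct recurrence, that the connected-component statistic on $s_1$ reproduces $F(n,k)$. First I would record that with $b_{n,k}=1$ the rule \eqref{gram2} collapses to $G=\{u\to uv^{a_1+a_2},\ v\to v^{a_2+1}\}$, with operator $\opd{D}=uv^{a_1+a_2}\frac{\partial}{\partial u}+v^{a_2+1}\frac{\partial}{\partial v}$, and note that this is a type $(E)$ grammar with $z=u$, $g(\vec{x})=v^{a_1+a_2}$ and auxiliary rule $v\to v^{a_2+1}$. Write $N(n,j)$ for the number of $\opd{D}^n(uv^{a_0+a_2})$-structures whose first component $s_1$ has $j$ connected components; the goal is to show $N(n,j)=F(n,j)$.

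Second, I would justify the canonical decomposition. Since $uv^{a_0+a_2}=u\cdot v^{a_0+a_2}$ is a product, the multiplicativity in \eqref{algG}, namely $\mathrm{Gen}(xy,t)=\mathrm{Gen}(x,t)\mathrm{Gen}(y,t)$, gives $\mathrm{Gen}(uv^{a_0+a_2},t)=\mathrm{Gen}(u,t)\,\mathrm{Gen}(v^{a_0+a_2},t)$; through Theorem \ref{rabeza} this says the species of $\opd{D}^n(uv^{a_0+a_2})$-structures is the product $\mathcal{U}\times\mathcal{V}^{a_0+a_2}$, so each individual structure splits into $s_1$ (the material generated from the initial $u$-seed, a $\mathcal{U}$-structure) and $s_2$ (the material generated from the $a_0+a_2$ initial $v$-seeds, a $\mathcal{V}^{a_0+a_2}$-structure). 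The splitting is well defined because no grammar rule sends a $u$-descendant into a $v$-seed or conversely. Because $G$ is type $(E)$, the backbone description recalled in Figure \ref{ZG-structure} identifies the connected components of $s_1$ with the successive applications of the rule $u\to uv^{a_1+a_2}$, that is, with the steps of the process that expand the (always unique) active $u$-letter.

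Third, I would obtain the recurrence for $N$ by peeling off the outermost factor in $\opd{D}^n=\opd{D}\circ\opd{D}^{n-1}$. A structure counted by $N(n-1,k)$ has one $u$-letter and, by the exponent bookkeeping, $a_2 n+a_1 k+a_0$ letters $v$ in its root word; applying $\opd{D}$ produces exactly two kinds of children, since $\opd{D}(uv^{q})=uv^{q+a_1+a_2}+q\,uv^{q+a_2}$. The first term is a single $u$-expansion, which appends a new backbone point and hence raises the component count from $k$ to $k+1$; the second term chooses one of the $q=a_2 n+a_1 k+a_0$ letters $v$ to expand and leaves the component count unchanged. Reading this off gives $N(n,k)=(a_2 n+a_1 k+a_0)\,N(n-1,k)+N(n-1,k-1)$, together with $N(0,0)=1$ and $N(0,j)=0$ for $j>0$. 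This is precisely recurrence \eqref{recu} with $b_{n,k}=1$ and the usual initial conditions, which by Proposition \ref{phaores} also governs $F$; hence $N(n,k)=F(n,k)$, proving the statement.

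The main obstacle is making the canonical splitting $s=(s_1,s_2)$ genuinely functorial at the level of individual enriched trees, rather than merely at the level of generating series, and pinning down the bijection that identifies one connected component of $s_1$ with one application of $u\to uv^{a_1+a_2}$; this is where the type $(E)$ structure theory of Randrianirina \cite{randrianirinahal-05091963} does the real work, and it is what makes the argument valid even in the degenerate case $a_1=0$, where the $v$-leaf count alone cannot separate the individual $F(n,k)$. Once that identification is secured, the two-term expansion of $\opd{D}(uv^{q})$ and the exponent $q=a_2 n+a_1 k+a_0$ make the recurrence immediate, and the remaining verification is routine.
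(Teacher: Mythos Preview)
Your proposal is correct and follows essentially the same approach as the paper: you both establish that the connected-component statistic $N(n,k)$ satisfies the same recurrence as $F(n,k)$ by examining the last application of $\opd{D}$ and distinguishing whether it expands the unique $u$-leaf (adding a backbone point and hence a component) or one of the $a_2 n+a_1 k+a_0$ available $v$-leaves (leaving the component count unchanged). Your treatment is slightly more explicit than the paper's in justifying the canonical splitting via the multiplicativity \eqref{algG} and in writing out $\opd{D}(uv^q)=uv^{q+a_1+a_2}+q\,uv^{q+a_2}$, but the substance is the same.
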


\begin{proof}
First, Proposition \ref{phaores} ensures that $\suit{F}$ is the number of 
$\opd{G}^n(uv^{a_0+a_2})$-structures having $a_2 n + a_1 k + a_0 + a_2$ $v$-leaves. 
Now, let $G(n,k)$ denote the number of $\opd{G}^n(uv^{a_0+a_2})$-structures $s$ whose $s_1$ has $k$ connected components.
\\ \noindent
We proceed by induction. For $n=0$, the unique structure has $a_0 + a_2$ $v$-leaves, 
so the relation holds trivially. For $n \ge 1$, assume the relation holds up to $n-1$, 
i.e., $\suit{T} = \suit{G}$, and show it for $n$. Consider a $\opd{G}_2^n(uv^{a_0+a_2})$-structure $s$ with $k$ connected components. 
We focus on the position of point $n$, with two possible cases:
\begin{itemize}
	\item Case 1: point $n$ is on the spine (at $i_k$). This occurs if and only if 
	the last derivation was applied to the unique $u$-leaf. This last step increases the number of connected components.
	\item Case 2: point $n$ is not on the spine. Then the last derivation was applied to one of the $v$-leaves. 
	This last step does not increase the number of connected components. Using the induction hypothesis 
	and Proposition \ref{phaores}, there are $a_2 (n-1) + a_1 k + a_0$ choices for this last step.
\end{itemize}
This correspondence ensures that $\big(\suit{G}\big)\nkid$ satisfies
\[
\suit{G} = G(n-1,k-1) + (a_2 n + a_1 k + a_0) G(n-1,k),
\]
which completes the proof.
\end{proof}

\begin{corollary}
Let $(\suit{T})$ be a sequence satisfying the usual initial conditions with
\[
\suit{T} = T(n-1,k-1) + (a_2 n + a_1 k - a_2) T(n-1,k),
\]
and consider the combinatorial differential equation system
\begin{align*}
	\left\{
	\begin{array}{cl}
		U' = & U V^{\,a_1 + a_2}, \quad U(0)=u, \\
		V' = & V^{\,a_2 + 1}, \quad V(0)=v.
	\end{array}
	\right.
\end{align*}
Then $\suit{T}$ represents the number of $\mathcal{U}$-structures with $k$ connected components.
\end{corollary}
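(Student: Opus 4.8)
The plan is to recognize this corollary as the boundary specialization $a_0=-a_2$ of the preceding proposition, and to check that the system in the statement is indeed the one attached to that grammar. Observe first that the grammar
\[
G=\{\,u\to uv^{a_1+a_2};\quad v\to v^{a_2+1}\,\}
\]
depends only on $a_1$ and $a_2$, not on $a_0$; its associated combinatorial differential system is precisely
\[
\left\{\begin{array}{cl}
U'=&UV^{\,a_1+a_2},\quad U(0)=u,\\
V'=&V^{\,a_2+1},\quad V(0)=v,
\end{array}\right.
\]
which is the system displayed in the corollary. So I would begin by noting that the recurrence $\suit{T}=T(n-1,k-1)+(a_2n+a_1k-a_2)T(n-1,k)$ is exactly the recurrence of the preceding proposition for $\suit{F}$ upon setting $a_0=-a_2$, since then $a_2n+a_1k+a_0=a_2n+a_1k-a_2$.

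The key point is the degeneration of the seed monomial. In the preceding proposition the relevant structures are the $\opd{D}^n(uv^{a_0+a_2})$-structures, and each is canonically decomposed as $s=(s_1,s_2)$ with $s_1$ a $\opd{U}$-structure and $s_2$ a $\opd{V}^{a_0+a_2}$-structure. Substituting $a_0=-a_2$ forces $a_0+a_2=0$, so the seed is $uv^{0}=u$ and the $\opd{V}$-part $s_2$ is a $\opd{V}^{0}$-structure, i.e.\ the empty product carrying a single trivial structure. Consequently the decomposition collapses to $s=s_1$, and every $\opd{D}^n(u)$-structure \emph{is} a pure $\opd{U}$-structure with no residual $v$-leaves.

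I would then conclude directly from the preceding proposition: it identifies $\suit{T}$ (with $a_0=-a_2$) as the number of $\opd{D}^n(uv^{a_0+a_2})$-structures $s$ whose $s_1$ has $k$ connected components, and since here $s=s_1$, this count is exactly the number of $\opd{U}$-structures with $k$ connected components. The only points needing genuine care—rather than an obstacle—are verifying that the $\opd{V}^{0}$-factor really is the unique empty structure so that the decomposition degenerates as claimed, checking that the usual initial conditions $T(0,0)=1$, $\suit{T}=0$ for $n<\max(k,0)$ are inherited under the specialization, and ensuring the standing nonvanishing hypothesis $a_2n+a_1k-a_2\neq 0$ is respected on the relevant range. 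None of these requires fresh argument beyond the preceding proposition, so the corollary follows by specialization.
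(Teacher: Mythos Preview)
Your proposal is correct and matches the paper's approach exactly: the corollary is the specialization $a_0=-a_2$ of the preceding proposition, which collapses the seed $uv^{a_0+a_2}$ to $u$ and makes the $\mathcal{V}^{0}$-factor trivial, so that every $\opd{D}^n(u)$-structure is simply a $\mathcal{U}$-structure and the count of those with $k$ connected components is $\suit{T}$. The paper gives no separate proof of the corollary precisely because this specialization is immediate.
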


\begin{theorem}\label{ffromgram}
Let $\big(\suit{F}\big)$ be a sequence satisfying \eqref{recu} with $\coul{a} = a_2 n + a_1 k + a_0$, $\coul{b} = 1$, and usual initial conditions. 
If $a_1 \neq 0$ and $a_2 \neq 0$, then
\begin{equation}\label{fnkfromgram}
	\suit{F} = \frac{1}{a_1^k\, k!} \sum_{j=0}^{k} (-1)^{\,k-j} \binom{k}{j} \prod_{r=1}^{n} (a_0 + a_1 j + r a_2).
\end{equation}
\end{theorem}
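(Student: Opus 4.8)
The plan is to prove \eqref{fnkfromgram} by induction on $n$, showing that the right-hand side satisfies the very same recurrence and initial data as $\suit{F}$; uniqueness of the solution of the triangular recurrence \eqref{recu} with \eqref{condiusue} then forces equality. Throughout I would write $\phi_j = a_0 + a_1 j$ and $P_m(x) = \prod_{r=1}^{m}(x + r a_2)$, so that the claimed formula reads $\frac{1}{a_1^{k}\,k!}\sum_{j=0}^{k}(-1)^{k-j}\binom{k}{j}P_n(\phi_j)$, which I denote $\widetilde{F}(n,k)$. The target recurrence (the case $\coul{b}=1$ of \eqref{recu}) is $F(n,k) = (a_2 n + a_1 k + a_0)F(n-1,k) + F(n-1,k-1)$.

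First I would settle the initial data. For $n=0$ the product is empty, $P_0\equiv 1$, so $\widetilde{F}(0,k) = \frac{1}{a_1^{k}k!}\sum_{j=0}^{k}(-1)^{k-j}\binom{k}{j} = \frac{1}{a_1^{k}k!}(1-1)^{k}$, equal to $1$ when $k=0$ and $0$ when $k\ge 1$, in agreement with \eqref{condiusue}. The boundary case $k=0$ of the recurrence is then immediate, since $\widetilde{F}(n,0)=P_n(\phi_0)=(\phi_0+na_2)P_{n-1}(\phi_0)=(a_0+na_2)\,\widetilde{F}(n-1,0)$.

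The heart of the argument is the inductive step for $k\ge 1$. Starting from the factorisation $P_n(\phi_j)=(\phi_j+na_2)P_{n-1}(\phi_j)$ and writing the linear factor as $\phi_j+na_2=(a_0+a_1k+na_2)-a_1(k-j)$, I would split $\widetilde{F}(n,k)$ into two sums. The first reproduces $(a_0+a_1k+na_2)\,\widetilde{F}(n-1,k)$ verbatim. In the second I use the discrete product identity $(k-j)\binom{k}{j}=k\binom{k-1}{j}$ (so the $j=k$ term drops) together with $(-1)^{k-j}=-(-1)^{(k-1)-j}$ to re-index; the prefactor $\frac{1}{a_1^{k}k!}$ collapses to $\frac{1}{a_1^{k-1}(k-1)!}$ and the surviving sum is exactly $\widetilde{F}(n-1,k-1)$. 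This gives $\widetilde{F}(n,k)=(a_0+a_1k+na_2)\widetilde{F}(n-1,k)+\widetilde{F}(n-1,k-1)$, closing the induction.

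The only delicate points — where I expect bookkeeping rather than ideas — are the sign tracking (two reversals, one from the linear-factor split and one from $(-1)^{k-j}=-(-1)^{(k-1)-j}$, which must cancel to leave a $+$), the vanishing of the $j=k$ boundary term, and the correct collapse of the factorial and power prefactors as the index range shrinks from $\{0,\dots,k\}$ to $\{0,\dots,k-1\}$. Conceptually this is just a discrete Leibniz rule: the operator $\sum_j(-1)^{k-j}\binom{k}{j}(\cdot)$ is a $k$-th finite difference in $j$, and multiplying $P_{n-1}$ by the linear factor $\phi_j+na_2$ splits, under this difference, into a value term and a shift term — precisely the two summands of the recurrence. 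Note that $a_1\neq 0$ is needed only so that $a_1^{-k}$ makes sense, whereas $a_2\neq 0$ is not used in this induction; an alternative more in the paper's spirit would extract \eqref{fnkfromgram} from the closed form $U(t)V(t)^{a_0+a_2}$ of \eqref{sol233}, where the explicit $a_2^{-1}$ dependence would explain that hypothesis, but the exponential factor appearing in $U(t)$ makes direct coefficient extraction more cumbersome than the induction above.
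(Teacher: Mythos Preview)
Your induction is correct: the base row $n=0$ is the binomial identity $(1-1)^k$, and in the inductive step the split $\phi_j+na_2=(a_0+a_1k+na_2)-a_1(k-j)$ followed by $(k-j)\binom{k}{j}=k\binom{k-1}{j}$ does exactly what you say, producing the two terms of the recurrence with the right sign and prefactor. Uniqueness of the solution of \eqref{recu} with \eqref{condiusue} then finishes.

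This is, however, a genuinely different route from the paper's. The paper does not verify the formula a posteriori; it derives it by solving the analytic system attached to the grammar $G=\{u\to uv^{a_1+a_2},\ v\to v^{a_2+1}\}$, obtaining $V(t)=(v^{-a_2}-a_2t)^{-1/a_2}$ and $U(t)=u\exp\big(\tfrac{1}{a_1}\big((v^{-a_2}-a_2t)^{-a_1/a_2}-v^{a_1}\big)\big)$, then expanding $U(t)V(t)^{a_0+a_2}$ via the exponential series (in the variable $v^{a_1}/a_1$), the binomial theorem, and the generalised binomial series for $(1-a_2tv^{a_2})^{-\beta}$, and finally identifying coefficients through Propositions~\ref{phaores} and~\ref{chendumrabez}. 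Your approach is shorter and entirely elementary --- in particular it shows, as you correctly flag, that the hypothesis $a_2\neq 0$ is superfluous for the statement itself (indeed \eqref{tnka2zero} is the $a_2=0$ specialisation). The paper's approach, on the other hand, is constructive rather than confirmatory: it explains where the formula comes from and illustrates the grammar\,$\leftrightarrow$\,differential-system correspondence that is the paper's whole point; the $a_2\neq 0$ assumption enters because the ODE $V'=V^{a_2+1}$ is solved with an $a_2$ in the denominator.
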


\begin{proof}
The grammar $G$, given in \eqref{gram2}, is
\begin{align*}
	G = \{& u \to u v^{\,a_1 + a_2}, \\
	& v \to v^{\,a_2 + 1} \}.
\end{align*}
Consider the system
\[
\begin{cases}
	U' = U V^{\,a_1 + a_2}, \quad U(0)=u, \\
	V' = V^{\,a_2 + 1}, \quad V(0)=v.
\end{cases}
\]
By separation of variables, we get
\[
V(t) = \big(v^{-a_2} - a_2 t\big)^{-1/a_2}, \quad
U(t) = u \exp\!\Big(\frac{(v^{-a_2} - a_2 t)^{-a_1 / a_2} - v^{a_1}}{a_1}\Big),
\]
valid as long as $v^{-a_2} - a_2 t \neq 0$.
Then
\[
U(t) V(t)^{\,a_0 + a_2} 
= u v^{\,a_0 + a_2} 
(1 - a_2 t\, v^{a_2})^{-(a_0 + a_2)/a_2} 
\exp\!\Big(\frac{v^{a_1}}{a_1} \big((1 - a_2 t\, v^{a_2})^{-a_1/a_2} - 1\big)\Big).
\]
\noindent
Expanding the exponential and the binomial, we obtain
\[
U(t) V(t)^{\,a_0 + a_2} = \sum_{k \ge 0} \frac{1}{k!} \left(\frac{v^{a_1}}{a_1}\right)^k 
\sum_{j=0}^{k} (-1)^{\,k-j} \binom{k}{j} (1 - a_2 t\, v^{a_2})^{-(a_0 + a_2 + a_1 j)/a_2}.
\]
\noindent
Moreover, for $n \ge 0$,
\[
(1 - a_2 t\, v^{a_2})^{-\beta} = \sum_{n \ge 0} \left(\prod_{r=1}^{n} (\beta + r - 1)\right) \frac{(a_2 t\, v^{a_2})^n}{n!}.
\]
\noindent
Therefore,
\begin{equation}\label{eq22}
	U(t) V(t)^{\,a_0 + a_2} = \sum_{k \ge 0} \frac{1}{a_1^k k!} \sum_{j=0}^{k} (-1)^{\,k-j} \binom{k}{j} \prod_{r=1}^{n} (a_0 + a_1 j + r a_2) \frac{t^n}{n!}.
\end{equation}
\noindent
On the other hand, Proposition \ref{chendumrabez} ensures
\begin{equation}\label{eq23}
	U(t) V(t)^{\,a_0 + a_2} = Gen(uv^{\,a_0 + a_2}, t) = 
	\sum_{n \ge 0} \left( \sum_{k=0}^{n} T(n,k)\, u\, v^{\,a_2 n + a_1 k + a_2 + a_0} \right) \frac{t^n}{n!}.
\end{equation}
\noindent
Comparing \eqref{eq22} and \eqref{eq23} yields
\[
F(n,k) = \frac{1}{a_1^k k!} \sum_{j=0}^{k} (-1)^{\,k-j} \binom{k}{j} \prod_{r=1}^{n} (a_0 + a_1 j + r a_2).
\]
\end{proof}

\section{Applications}
We have already seen an example involving the \( r \)-Whitney--Eulerian numbers.
We now present several further applications. Some of these yield improved formulations of known results, while others provide new contributions.

\subsection{Descents in Stirling $r$-permutations}

Xiao He (\cite{He2023mthOrderEulerian}) studied $B^{(r)}(n,k)$, the numbers of $r$-permutations on $[n]$ with $k$ descents. He proved that for all $k$ and $n>0$, 
\begin{equation}
B^{(r)}(n,k)=\big(rn-k+(1-r)\big)B^{(r)}(n-1,k-1) + (k+1)B^{(r)}(n-1,k),
\label{2.4}
\end{equation}
with $B^{(r)}(n,0)=1\mbox{ pour } $n$\geq 1\text{  et }~ B^{(r)}(n,k)=0$ if $n\leq k$ ou $k<0$.

%Here we want to use this relation to give anther interpretation of these numbers and an explicit formula by using
%a Fa`a di Bruno’s type expression, which presents a relationship between partitions
%and permutations.
\noindent
First, the grammar of Hao associated is then 
\begin{align*}
G=\{x&\to x^ry, \\
y&\to x^ry \}.
\end{align*}
\begin{definition}[Full $r$-ary Tree]
A \emph{full $r$-ary tree} is a rooted tree in which every internal node has 
exactly $r$ children, where $r \ge 1$ is fixed. 
Nodes with no children are called \emph{leaves}. 
Thus, in a full $r$-ary tree, each internal node has exactly $r$ children 
and each leaf has $0$ children.
\end{definition}
We denote $\mathcal{A}^{(r)}$ the structure associated to full $r$-ary tree. It can be defined also recursively. For $n=0$: $\mathcal{A}^{(r)}=\emptyset$ and for $n\geq 0$: $\mathcal{A}^{(r)}=\big(p,\mathcal{A}^{(r)}_1,\mathcal{A}^{(r)}_2\cdots, \mathcal{A}^{(r)}_r\big),$ with $p$ is a point called root and the $\big(\mathcal{A}^{(r)}_i\big)$ are full $r$-ary trees on $k_i$ points with $\sum_{i=1}^{i=r}k_i=n-1$.

\noindent
Here, we consider also an order on each children and think that the last among the $r$ children is called cadet. 

\begin{center}
\begin{tikzpicture}[scale=1,
	every node/.style={circle, draw, minimum size=4mm, inner sep=1pt, font=\small},
	box/.style={draw, rectangle, minimum width=11cm, minimum height=5cm}
	]
	
	\node[box, anchor=south west] (b1) at (0,0) {};
	
	\node[fill=darkgray!82] (y5a) at (4.45,0.6) {$1$};

	\node[fill=white!70] (x3a) at (3,1.8) {$x$};
	\node[fill=darkgray!75] (x4a) at (1,1.8) {$2$};
	\node[fill=darkgray!70] (x5a) at (5.5,1.8) {$4$};
	\node[fill=darkgray!65] (x6a) at (8.25,1.8) {$3$};
	
	\draw (y5a)--(x3a); 
	\draw (y5a)--(x4a);
	\draw (y5a)--(x5a); 
	\draw (y5a)--(x6a);
	
	\node[fill=white!70] (z3a) at (0.25,3.2) {$x$};
	\node[fill=white!60]  (z4a) at (1,3.2) {$x$};
	\node[fill=white!70] (z5a) at (1.8,3.2) {$x$};
	\node[fill=darkgray!60](z6a) at (2.6,3.2) {$5$};
	
	\draw (x4a)--(z3a); 
	\draw (x4a)--(z4a);
	\draw (x4a)--(z5a); 
	\draw (x4a)--(z6a);

	\node[fill=white!70] (t3a) at (2,4.4) {$x$};
	\node[fill=white!60]  (t4a) at (2.8,4.4) {$x$};
	\node[fill=white!70] (t5a) at (3.6,4.4) {$x$};
	\node[fill=blue!60](t6a) at (4.4,4.4) {$y$};
	
	\draw (z6a)--(t3a); 
	\draw (z6a)--(t4a);
	\draw (z6a)--(t5a); 
	\draw (z6a)--(t6a);

	\node[fill=white!70] (u3a) at (4,3.2) {$x$};
	\node[fill=white!60]  (u4a) at (4.8,3.2) {$x$};
	\node[fill=white!70] (u5a) at (5.6,3.2) {$x$};
	\node[fill=blue!60](u6a) at (6.4,3.2) {$y$};

	\draw (x5a)--(u3a); 
	\draw (x5a)--(u4a);
	\draw (x5a)--(u5a); 
	\draw (x5a)--(u6a);
	
	\node[fill=white!70] (v3a) at (7.8,3.2) {$x$};
	\node[fill=white!60]  (v4a) at (8.6,3.2) {$x$};
	\node[fill=white!70] (v5a) at (9.4,3.2) {$x$};
	\node[fill=blue!60](v6a) at (10.2,3.2) {$y$};
	
	\draw (x6a)--(v3a); 
	\draw (x6a)--(v4a);
	\draw (x6a)--(v5a); 
	\draw (x6a)--(v6a);
\end{tikzpicture}
\captionof{figure}{A full $4$-ary true for $n=5$}\label{4ary}
\end{center} 
\noindent
In Figure~\ref{4ary}, the tree has 16 leaves and the blue nodes ($y$) are cadets; there are 3 cadets.

\begin{prop}
$B^{(r)}(n,k)$ counts the number of $\mathcal{A}^{(r+1)}$-structures on $[n]$ with $k+1$ cadet leaves.
\end{prop}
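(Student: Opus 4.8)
The plan is to obtain the statement as an immediate reading of Hao's Proposition~\ref{phaores}, in the same spirit as the $r$-Whitney--Eulerian example. First I would match the recurrence \eqref{2.4} with the template \eqref{recu}: the factor $(k+1)$ gives $a_2=0$, $a_1=1$, $a_0=1$, while $rn-k+(1-r)$ gives $b_2=r$, $b_1=-1$, $b_0=1-r$. Substituting these into Hao's grammar \eqref{gram2} (with the dictionary $u=x$, $v=y$) produces $u\to u^{b_1+b_2+1}v^{a_1+a_2}=x^r y$ and $v\to u^{b_2}v^{a_2+1}=x^r y$, which is exactly $G=\{x\to x^r y,\ y\to x^r y\}$, with operator $\opd{D}=x^r y\,\partial_x+x^r y\,\partial_y$.

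Next I would apply Proposition~\ref{phaores}. The seed monomial is $u^{b_0+b_1+b_2}v^{a_0+a_2}=x^{0}y^{1}=y$, and on the right-hand side the exponents specialise to $b_2n+b_1k+b_0+b_1+b_2=rn-k$ and $a_2n+a_1k+a_0+a_2=k+1$, so that
\begin{equation*}
	\opd{D}^n(y)=\sum_{k=0}^{n} B^{(r)}(n,k)\,x^{\,rn-k}\,y^{\,k+1}.
\end{equation*}
Reading off this expansion exactly as in the earlier examples, $B^{(r)}(n,k)$ is the number of $\opd{D}^n(y)$-structures that carry precisely $k+1$ $y$-leaves (and therefore $rn-k$ $x$-leaves).

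The real content is then the identification of the $\opd{D}^n(y)$-structures with the $\mathcal{A}^{(r+1)}$-structures on $[n]$, together with the matching of $y$-leaves with cadet leaves. Here I would argue that each application of $\opd{D}$ replaces a chosen leaf by an internal node whose ordered offspring spells the word $x^r y$, so the first $r$ children occupy $x$-slots and the terminal child occupies the cadet slot; iterating $n$ times yields an increasing full $(r+1)$-ary tree whose internal nodes bear the derivation times $1,\dots,n$, i.e.\ precisely an $\mathcal{A}^{(r+1)}$-structure on $[n]$. Since the grammar always emits the letter $y$ in the last (cadet) position, a leaf is labelled $y$ if and only if it sits in a cadet position, so the $y$-leaves are exactly the cadet leaves; combined with the count above this yields the assertion, with the seed $y$ at $n=0$ counted as a single cadet leaf so that $B^{(r)}(0,0)=1$.

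The main obstacle is this structural dictionary: one must be sure that the increasing $\opd{D}^n$-trees of Figure~\ref{fig:(1.1)} really are the $\mathcal{A}^{(r+1)}$-structures on $[n]$ under the cadet convention, rather than some larger labelled family. As an internal cross-check I would verify recurrence \eqref{2.4} directly on the trees: collapsing the internal node of largest label $n$ back to a single leaf either undoes an $x$-slot expansion (there are $rn-k+(1-r)$ available $x$-leaves in the reduced tree, explaining the first coefficient) or a cadet-slot expansion (there are $k+1$ available cadet leaves, explaining the second), reproducing \eqref{2.4} term by term and confirming the identification.
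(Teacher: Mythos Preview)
Your proposal is correct and follows essentially the same route as the paper: invoke Proposition~\ref{phaores} to obtain $\opd{D}^n(y)=\sum_k B^{(r)}(n,k)\,x^{rn-k}y^{k+1}$, read off $B^{(r)}(n,k)$ as the number of $\opd{D}^n(y)$-structures with $k+1$ $y$-leaves, and then identify these structures with $\mathcal{A}^{(r+1)}$-structures in which the cadet leaf is the one receiving the label $y$. Your derivation of the grammar from the $a_i,b_i$ and your recurrence cross-check are additional detail that the paper omits, but the underlying argument is the same.
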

\begin{proof}
First, the proposition \ref{phaores} ensures us 
\begin{equation}\label{key1}
	\opd{G}^{n}(y)=\sum\limits_{k=0}^{n}B^{(r)}(n,k)x^{nr-k}y^{k+1}.
\end{equation} Then $B^{(r)}(n,k)$ is the numbers of $\opd{G}^{n}(y)$-structure having $k+1$ $y$-leaves.
We notice also that  $\opd{G}^{n}(y)$-structure is the same than $\mathcal{A}^{(r+1)}$-structure where the leaves are labeled $x$ and $y$ (only the last children is $y$). Thus we conclude. 
\end{proof}

\noindent Moreover, the  system of differential equations associated to the grammar  is
\begin{equation}
\begin{cases}
	X'(t)= X^r(t)Y(t),&X(0)=x \\ 
	Y'(t)= X^r(t)Y(t),&Y(0)=y.
\end{cases}
\label{key2}
\end{equation}
\noindent We apply know Proposition \ref{chendumrabez} (or Theorem \ref{rabeza})  to the equation \eqref{key1} to get the following.
\begin{theorem}\label{theopribb}
If $\big(X(t),Y(t)\big)$ is the solution of the system of differential equations \eqref{key2}, then
\begin{equation}
	\sum_{n\geq 0} \big( \sum\limits_{k=0}^{n}B^{(r)}(n,k)x^{nr-k}y^{k+1} \big) \vatn= Y(t).
\end{equation}
\end{theorem} 
\subsubsection{Case where $r=2$}
We write $B$ for $B^{(2)}$, so \eqref{2.4}'s version is
\begin{equation}\label{5}
\suit{B}=(k+1)B(n-1,k)+(2n-k-1)B(n-1,k-1).
\end{equation}
\noindent
Define the formal series
\[T(z):=\sum_{n\geq 1} n^{n-1}\varn.\]
One can compute, or use directly the fact about nonplane labelled trees in Flajolet(\cite{FlajoletSedgewick2007}) to get
\begin{equation}
T(z)=ze^{T(z)} \implies T'(z)=\frac{e^{T(z)}}{1-zT(z)}=\frac{e^{T(z)}}{1-T(z)}.
\end{equation}
Then, for $x\not=y$
\begin{equation}
\begin{cases}
	X(t)= \dfrac{x-y}{1-T \!\left(
		\frac{y}{x}\exp\!\left(
		-\frac{y}{x} + (x- y)^2 t
		\right)
		\right)}, \\ 
	Y(t)=\dfrac{(x-y)\,T\!\left(
		\frac{y}{x}\exp\!\left(
		-\frac{y}{x} + (x-y)^2 t
		\right)
		\right)}{
		1 - T\!\left(
		\frac{y}{x}\exp\!\left(
		-\frac{y}{x} + (x-y)^2 t
		\right)
		\right)}
\end{cases}
\label{key112}
\end{equation}
solves the equation \eqref{key2} if $r=2$. We apply now the proposition \ref{chendumrabez}, with equation \eqref{key1} to have
\begin{theorem} For $x\not=y$
\begin{equation}\sum_{n\geq 0} \sum_{k=0}^{k=n} B(n,k)x^{2n-k}y^{k+1} \vatn=\dfrac{(x-y)\,T\!\left(
		\frac{y}{x}\exp\!\left(
		-\frac{y}{x} + (x-y)^2 t
		\right)
		\right)}{
		1 - T\!\left(
		\frac{y}{x}\exp\!\left(
		-\frac{y}{x} + (x-y)^2 t
		\right)
		\right)}. \end{equation}
		\end{theorem}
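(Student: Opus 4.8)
The plan is to recognize the left-hand side directly as the exponential generating function $\mathrm{Gen}(y,t)=\sum_{n\ge 0}\opd{D}^n(y)\,\vatn$ attached to the grammar, and then to transport the problem to the analytic system \eqref{key2}. Indeed, specializing Proposition~\ref{phaores} to the present grammar gives \eqref{key1}, so that
\[
\mathrm{Gen}(y,t)=\sum_{n\ge 0}\opd{D}^n(y)\,\vatn
=\sum_{n\ge 0}\Big(\sum_{k=0}^{n}B(n,k)\,x^{2n-k}y^{k+1}\Big)\vatn,
\]
which is exactly the series on the left of the statement. By Proposition~\ref{chendumrabez}, this same series equals $Y(t)$, the second coordinate of the solution of \eqref{key2} with $r=2$ and initial data $X(0)=x$, $Y(0)=y$. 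Thus the theorem is equivalent to the assertion that the closed form in \eqref{key112} is precisely this $Y(t)$, and it remains only to verify that \eqref{key112} solves \eqref{key2}.

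To verify this, I would first exploit the structural symmetry of the system: since $X'=Y'=X^2Y$, the difference $X-Y$ is a first integral, so $X(t)-Y(t)=x-y=:c$ for all $t$. Substituting $X=Y+c$ reduces the system to the single separable equation $Y'=(Y+c)^2Y$. Integrating by partial fractions, and rewriting the result in terms of $X=Y+c$, yields the conserved relation
\[
\frac{1}{c^2}\ln\!\Big(\frac{Y}{X}\Big)+\frac{1}{cX}
=t+\frac{1}{c^2}\ln\!\Big(\frac{y}{x}\Big)+\frac{1}{cx}.
\]

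The key step is then to read off the tree function from this relation. Setting $\rho:=Y/X$, so that $X=c/(1-\rho)$ and hence $c/X=1-\rho$, the displayed identity collapses after exponentiation to
\[
\rho\,e^{-\rho}=\frac{y}{x}\exp\!\Big(-\frac{y}{x}+(x-y)^2 t\Big)=:z.
\]
Comparing with the defining relation $T(z)e^{-T(z)}=z$ identifies $\rho=T(z)$ on the branch selected by the initial value $\rho(0)=y/x$; back-substituting into $X=c/(1-\rho)$ and $Y=\rho X$ reproduces exactly the two expressions in \eqref{key112}. Alternatively---and perhaps more cleanly for the write-up---one may simply differentiate the candidate \eqref{key112}, using $z'(t)=(x-y)^2 z$ together with $T'(z)=e^{T(z)}/(1-T(z))$ and $e^{T(z)}z=T(z)$, to obtain $\frac{d}{dt}T(z)=(x-y)^2 T(z)/(1-T(z))$, and then check termwise that both $X'=X^2Y$ and $Y'=X^2Y$ hold, invoking uniqueness of the formal-power-series solution.

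The main obstacle I anticipate is not the algebra but the analytic bookkeeping around the tree function: one must justify that $\rho=T(z)$ is the correct branch (the one reducing to $y/x$ at $t=0$, rather than another root of $\rho e^{-\rho}=z$), and that the exponentiations and the implicit inversion $w\mapsto T(w e^{-w})=w$ are legitimate as identities of formal power series in $t$ over $\mathbb{Q}[[x,y]]$ (equivalently, on a neighbourhood where $|T(z)|<1$, so that the factor $1-T(z)$ stays invertible and the denominators in \eqref{key112} make sense). Once the branch and the domain of validity are pinned down, the chain LHS $=\mathrm{Gen}(y,t)=Y(t)=$ RHS follows immediately from Propositions~\ref{phaores} and~\ref{chendumrabez}.
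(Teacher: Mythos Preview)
Your proposal is correct and follows essentially the same route as the paper: identify the left-hand side with $\mathrm{Gen}(y,t)$ via \eqref{key1} (Proposition~\ref{phaores}), invoke Proposition~\ref{chendumrabez} to equate this with $Y(t)$, and then check that the closed form \eqref{key112} solves the analytic system \eqref{key2} for $r=2$. The paper simply asserts that \eqref{key112} is the solution and applies the two propositions, whereas you supply the missing derivation (via the first integral $X-Y=c$, partial-fraction integration, and the substitution $\rho=Y/X$ leading to $\rho e^{-\rho}=z$); this is added detail rather than a different argument.
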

		\begin{corollary}
For $y\not=1$
\begin{equation}\sum_{n\geq 0} \sum_{k=0}^{k=n} B(n,k)y^{k+1} \vatn=\dfrac{(1-y)\,T\!\left(
		y\exp\!\left(
		-y + (1-y)^2 t
		\right)
		\right)}{
		1 - T\!\left(
		y\exp\!\left(
		-y + (1-y)^2 t
		\right)
		\right)}. \end{equation}
		\end{corollary}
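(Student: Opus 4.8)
The plan is to recognize that this corollary is nothing more than the specialization $x = 1$ of the theorem immediately preceding it. First I would observe that the theorem establishes, for every pair $(x,y)$ with $x \neq y$, an equality between two analytic functions of $t$ whose coefficients depend on $x$ and $y$; the hypothesis $x \neq y$ is what guarantees both that the system \eqref{key2} is solved by \eqref{key112} and that the right-hand side is well defined. Setting $x = 1$ turns the constraint $x \neq y$ into $1 \neq y$, which is exactly the standing assumption $y \neq 1$ of the corollary.

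Second, I would carry out the substitution on each side. On the left-hand side the factor $x^{2n-k}$ becomes $1^{2n-k} = 1$, so the double sum collapses to $\sum_{n \geq 0} \sum_{k=0}^{n} B(n,k)\, y^{k+1} \vatn$. On the right-hand side, every occurrence of $x$ is replaced by $1$: the ratio $\tfrac{y}{x}$ becomes $y$, the term $-\tfrac{y}{x}$ becomes $-y$, the quantity $(x-y)^2$ becomes $(1-y)^2$, and the prefactor $(x-y)$ becomes $(1-y)$. The resulting expression is term-for-term the claimed right-hand side, so the two identities coincide.

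The only point requiring care — and the closest thing to an obstacle — is the legitimacy of evaluating the theorem's identity at $x = 1$. Since both sides are analytic in $x$ on a neighbourhood of $x = 1$ whenever $y \neq 1$ (the tree function $T$ is analytic at the relevant argument, and the identity of the theorem holds throughout the region $x \neq y$), the specialization is valid by analytic continuation. Equivalently, reading the theorem as an equality of formal power series in $t$ whose coefficients are well defined precisely for $x \neq y$, the substitution $x = 1$ is legitimate coefficientwise as soon as $y \neq 1$. No new computation is needed beyond this substitution, so the corollary follows immediately.
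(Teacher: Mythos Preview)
Your proposal is correct and matches the paper's intended argument: the corollary is stated immediately after the theorem with no proof, so it is obtained simply by specializing $x=1$, exactly as you describe. Your additional remark about the legitimacy of the substitution (via analyticity or coefficientwise in the formal power series) is a welcome clarification that the paper leaves implicit.
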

		
		\subsubsection{General case $r\geq 1$}
		Let's define first a combinatorial structure.
		\begin{definition}\label{combistruc}
Fix integers $a, b, r$ with $b-a\in \mathbb{N}$.
\begin{itemize}
	\item A tree may consist of a single leaf. Such a leaf can be colored in $a$ different colors.
	
	\item Otherwise, a leaf can be expanded into an internal node. Each internal node produces either:
	\begin{itemize}
		\item exactly $r$ children, or
		\item exactly $r+1$ children.
	\end{itemize}
	
	\item Coloring rules:
	\begin{itemize}
		\item Leaves are colored with $a$ colors;
		\item Internal nodes with $r$ children can be colored in $(b-a)$ different ways;
		\item Internal nodes with $r+1$ children carry no additional color.
	\end{itemize}
	\item we label each internal node in such way it is croissant (increasing) meanly if node x is descend of node y then labbel of x > label of y.
\end{itemize}
\end{definition}

\noindent Let $c_n$ be the numbers of such tree with $n$ internal nodes and $C^a_b(t):=\sum_{n\geq 1} c_n\frac{t^n}{n!}$ be the generating function. One can see is 
\begin{equation}
\begin{cases}
	c_0=a \\ 
	c_{k+1}
	=
	\sum_{\substack{k_1+\cdots+k_{r+1}=k}}
	\binom{k}{k_1,\dots,k_{r+1}} c_{k_1}\cdots c_{k_{r+1}}
	+
	(b-a)
	\sum_{\substack{k_1+\cdots+k_r=k}}
	\binom{k}{k_1,\dots,k_r} c_{k_1}\cdots c_{k_r}.
\end{cases}
\label{key11}
\end{equation}
\begin{definition} \label{serieb}
We define the formal series
\[C_y(t):=\sum_{n\geq 1} f_n(y)\frac{t^n}{n!},\]
where
\begin{equation}
	\begin{cases}
		f_0=1 \\ 
		f_{k+1}(y)
		=
		\sum_{\substack{k_1+\cdots+k_{r+1}=k}}
		\binom{k}{k_1,\dots,k_{r+1}} f_{k_1}(y)\cdots f_{k_{r+1}}(y)
		+
		(y-1)
		\sum_{\substack{k_1+\cdots+k_r=k}}
		\binom{k}{k_1,\dots,k_r} f_{k_1}(y)\cdots f_{k_r}(y).
	\end{cases}
	\label{key22}
\end{equation}
\end{definition}

\noindent Analytically, we can check that
\begin{equation}
C_y'(t)= C_y(t)^{r+1}+(y-1) C_y(t)^r; \quad C_y(0)=1.
\end{equation}
The construction is just $C_y=C_y^1$. 

\noindent The combinatorial integration with the definition \ref{combistruc} also allow us to say
\begin{equation}\label{key33}
(\mathcal{C}_b^a)'=(\mathcal{C}_b^a)^{r+1}+(b-a) (\mathcal{C}_b^a)^{r}\implies ({C}_b^a)'(t)=({C}_b^a)^{r+1}(t)+(b-a) ({C}_b^a)^{r}(t).
\end{equation} 
So, $\big( C_y^x(t), C_y^x(t)+(y-x)   \big)$ solves the system of differential equations \eqref{key2}. Thus, Theorem \ref{theopribb} gives us the following.
\begin{theorem}
\begin{equation}
	\sum_{n\geq 0} \big( \sum\limits_{k=0}^{n}B^{(r)}(n,k)x^{nr-k}y^{k+1} \big) \vatn= C_y^x(t)+(y-x).
\end{equation}
\end{theorem}
\noindent In particular
\begin{theorem}
\begin{equation}
	\sum_{n\geq 0} \big( \sum\limits_{k=0}^{n}B^{(r)}(n,k)y^{k+1} \big) \vatn= C_y(t)+(y-1).
\end{equation}
\end{theorem}

\subsection{Case \( b_2 = 0 \)}
Many years ago, Wilf \cite{wilf2004methodcharacteristicsproblem89} (and also Neuwirth \cite{NEUWIRTH200133}) found the generating function for the case $b_2=0$, which yields an explicit formula. We will show here that these results can be seen quickly using grammar.

The Hao grammar associated is $u\to u^{b_1+1}v^{a_1+a_2}$, $v\to v^{a_2+1}$ with differential system
\[
\left\{
\begin{array}{ll}
U' = U^{b_1+1} V^{a_1+a_2}, & U(0)=u,\\
V' = V^{a_2+1}, & V(0)=v.
\end{array}
\right.
\]

\renewcommand{\arraystretch}{2.3}

\begin{center}
\resizebox{1\linewidth}{!}{$
	\begin{array}{|c|c|c|c|}
		\hline
		& b_1 = 0 & b_1 = -1 & b_1 \neq 0,-1 \\
		\hline
		
		a_2 = 0 &
		\begin{array}{c}
			V(t) = v e^t\\
			U(t) =
			\begin{cases}
				u \exp\!\left(\frac{v^{a_1}}{a_1}(e^{a_1 t}-1)\right),\\
				\hfill a_1 \neq 0\\
				u e^t, \quad a_1 = 0
			\end{cases}
		\end{array}
		&
		\begin{array}{c}
			V(t) = v e^t\\
			U(t) =
			\begin{cases}
				u + \frac{v^{a_1}}{a_1}(e^{a_1 t}-1),\\
				\hfill a_1 \neq 0\\
				u + t, \quad a_1 = 0
			\end{cases}
		\end{array}
		&
		\begin{array}{c}
			V(t) = v e^t\\
			U(t) =
			\begin{cases}
				\left[u^{-b_1} - \frac{b_1 v^{a_1}}{a_1}(e^{a_1 t}-1)\right]^{-1/b_1},\\
				\hfill a_1 \neq 0\\
				\left[u^{-b_1} - b_1 t\right]^{-1/b_1}, \quad a_1 = 0
			\end{cases}
		\end{array}
		\\
		\hline
		
		a_2 = -1 &
		\begin{array}{c}
			V(t) = v+t\\
			U(t) =
			\begin{cases}
				u \exp\!\left(\frac{(v+t)^{a_1}-v^{a_1}}{a_1}\right),\\
				\hfill a_1 \neq 0\\
				u \frac{v+t}{v}, \quad a_1 = 0
			\end{cases}
		\end{array}
		&
		\begin{array}{c}
			V(t) = v+t\\
			U (t)=
			\begin{cases}
				u + \frac{(v+t)^{a_1}-v^{a_1}}{a_1},\\
				\hfill a_1 \neq 0\\
				u + \ln\!\left(\frac{v+t}{v}\right), \quad a_1 = 0
			\end{cases}
		\end{array}
		&
		\begin{array}{c}
			V(t) = v+t\\
			U(t) =
			\begin{cases}
				\left[u^{-b_1} - \frac{b_1}{a_1}\big((v+t)^{a_1}-v^{a_1}\big)\right]^{-1/b_1},\\
				\hfill a_1 \neq 0\\
				\left[u^{-b_1} - b_1 \ln\!\left(\frac{v+t}{v}\right)\right]^{-1/b_1}, \quad a_1 = 0
			\end{cases}
		\end{array}
		\\
		\hline
		
		a_2 \neq 0,-1 &
		\begin{array}{c}
			V(t) = (v^{-a_2}-a_2 t)^{-1/a_2}\\
			U(t) =
			\begin{cases}
				u \exp\!\left(\frac{(v^{-a_2}-a_2 t)^{-a_1/a_2} - v^{a_1}}{-a_1}\right),\\
				\hfill a_1 \neq 0\\
				u (v^{-a_2}-a_2 t)^{-1/a_2} v, \quad a_1 = 0
			\end{cases}
		\end{array}
		&
		\begin{array}{c}
			V(t) = (v^{-a_2}-a_2 t)^{-1/a_2}\\
			U(t) =
			\begin{cases}
				u + \frac{(v^{-a_2}-a_2 t)^{-a_1/a_2} - v^{a_1}}{-a_1},\\
				\hfill a_1 \neq 0\\
				u - \frac{1}{a_2}\ln\!\left(\frac{v^{-a_2}-a_2 t}{v^{-a_2}}\right), \quad a_1 = 0
			\end{cases}
		\end{array}
		&
		\begin{array}{c}
			V(t) = (v^{-a_2}-a_2 t)^{-1/a_2}\\
			U(t) =
			\begin{cases}
				\left[u^{-b_1} - \frac{b_1}{a_1}
				\left((v^{-a_2}-a_2 t)^{-a_1/a_2} - v^{a_1}\right)\right]^{-1/b_1},\\
				\hfill a_1 \neq 0\\
				\left[u^{-b_1} + \frac{b_1}{a_2}\ln\!\left(\frac{v^{-a_2}-a_2 t}{v^{-a_2}}\right)\right]^{-1/b_1}, \quad a_1 = 0
			\end{cases}
		\end{array}
		\\
		\hline
		
	\end{array}
	$}
	\end{center}
	\noindent This can be combine with Propositions \ref{phaores} and \ref{chendumrabez} one 
	\[U(t)^{b_1+b_0}V(t)^{a_2+a_0}=\sum_{n\geq 0} \Big( \sum_{k=0}^n \suit{T} u^{b_1 k +b_0+b_1} v^{a_2 n + a_1 k +a_0+a_2}\Big) \frac{t^n}{n!}.\]
	\noindent For instance, if we take the case where $a_2,b_1\notin \{0,-1\}$, $a_1\not=0$ for $v=1$:
	\[
	\left( u^{-b_1} + \frac{b_1}{a_1} \left( 1 - (1 - a_2 t)^{-a_1/a_2} \right) \right)^{-\frac{b_1+b_0}{b_1}} 
	\cdot
	(1 - a_2 t)^{-\frac{a_2 + a_0}{a_2}} = \sum_{n\geq 0} \Big( \sum_{k=0}^n \suit{T} u^{b_1 k +b_0+b_1} \Big) \frac{t^n}{n!}.
	\]
	We can change now $x=u^{b_1}$ and get the following.
	\begin{prop} If  $a_2,b_1\notin \{0,-1\}$, $a_1\not=0$, then
\[
\left( 1 + x \frac{b_1}{a_1} \left( 1 - (1 - a_2 t)^{-a_1/a_2} \right) \right)^{-\frac{b_1+b_0}{b_1}}
(1 - a_2 t)^{-\frac{a_2+a_0}{a_2}}
=
\sum_{n\ge 0} \left( \sum_{k=0}^n T(n,k)\, x^k \right)\frac{t^n}{n!}.
\]
\end{prop}
This is similar to what Herbert \cite{wilf2004methodcharacteristicsproblem89} has. 

\subsection{Case \( a_1+a_2 = 0 \)}
The Hao grammar associated is $u\to u^{b_1+b_2+1}$, $v\to u^{b_2}v^{a_2+1}$ with differential system
\[
\left\{
\begin{array}{ll}
U' = U^{b_1+b_2+1}, & U(0)=u,\\
V' = U^{b_2}V^{a_2+1}, & V(0)=v.
\end{array}
\right.
\]

\renewcommand{\arraystretch}{2.3}
\begin{center}
\resizebox{1\linewidth}{!}{$
	\begin{array}{|c|c|c|c|}
		\hline
		& a_2 = 0 & a_2 = -1 & a_2 \neq 0,-1 \\
		\hline
		
		b_1+b_2 = 0 &
		\begin{array}{c}
			U(t) = u e^t\\
			V(t) =
			\begin{cases}
				v \exp\!\left(\frac{u^{b_2}}{b_2}(e^{b_2 t}-1)\right),\\
				\hfill b_2 \neq 0\\
				v e^t, \quad b_2 = 0
			\end{cases}
		\end{array}
		&
		\begin{array}{c}
			U(t) = u e^t\\
			V(t) =
			\begin{cases}
				v + \frac{u^{b_2}}{b_2}(e^{b_2 t}-1),\\
				\hfill b_2 \neq 0\\
				v + t, \quad b_2 = 0
			\end{cases}
		\end{array}
		&
		\begin{array}{c}
			U(t) = u e^t\\
			V(t) =
			\begin{cases}
				\left[v^{-a_2} - \frac{a_2 u^{b_2}}{b_2}(e^{b_2 t}-1)\right]^{-1/a_2},\\
				\hfill b_2 \neq 0\\
				\left[v^{-a_2} - a_2 t\right]^{-1/a_2}, \quad b_2 = 0
			\end{cases}
		\end{array}
		\\
		\hline
		
		b_1+b_2 = -1 &
		\begin{array}{c}
			U(t) = u + t \\
			V(t) =
			\begin{cases}
				v \exp\!\left(\frac{(u+t)^{b_2+1}-u^{b_2+1}}{b_2+1}\right),\\
				\hfill b_2 \neq -1\\
				v \frac{u+t}{u}, \quad b_2 = -1
			\end{cases}
		\end{array}
		&
		\begin{array}{c}
			U(t) = u + t\\
			V(t) =
			\begin{cases}
				v + \frac{(u+t)^{b_2+1}-u^{b_2+1}}{b_2+1},\\
				\hfill b_2 \neq -1\\
				v + \ln\!\left(\frac{u+t}{u}\right), \quad b_2 = -1
			\end{cases}
		\end{array}
		&
		\begin{array}{c}
			U(t) = u + t \\
			V(t) =
			\begin{cases}
				\left[v^{-a_2} - \frac{a_2}{b_2+1}\big((u+t)^{b_2+1}-u^{b_2+1}\big)\right]^{-1/a_2},\\
				\hfill b_2 \neq -1\\
				\left[v^{-a_2} - a_2 \ln\!\left(\frac{u+t}{u}\right)\right]^{-1/a_2}, \quad b_2 = -1
			\end{cases}
		\end{array}
		\\
		\hline
		
		b_1+b_2 \neq 0,-1 &
		\begin{array}{c}
			U(t) = (u^{-(b_1+b_2)}-(b_1+b_2)t)^{-1/(b_1+b_2)}\\
			V(t) =
			\begin{cases}
				v \exp\!\left(
				\frac{
					(u^{-(b_1+b_2)}-(b_1+b_2)t)^{1-\frac{b_2}{b_1+b_2}}
					- u^{b_2}
				}{
					b_1
				}
				\right),\\
				\hfill b_1 \neq 0\\
				v \exp\!\left(\ln\!\frac{u^{-(b_1+b_2)}-(b_1+b_2)t}{u^{-(b_1+b_2)}}\right), \quad b_1=0
			\end{cases}
		\end{array}
		&
		\begin{array}{c}
			U(t) = (u^{-(b_1+b_2)}-(b_1+b_2)t)^{-1/(b_1+b_2)}\\
			V(t) =
			\begin{cases}
				v + 
				\frac{
					(u^{-(b_1+b_2)}-(b_1+b_2)t)^{1-\frac{b_2}{b_1+b_2}}
					- u^{b_2}
				}{
					b_1
				},\\
				\hfill b_1 \neq 0\\
				v + \ln\!\left(\frac{u^{-(b_1+b_2)}-(b_1+b_2)t}{u^{-(b_1+b_2)}}\right), \quad b_1=0
			\end{cases}
		\end{array}
		&
		\begin{array}{c}
			U(t) = (u^{-(b_1+b_2)}-(b_1+b_2)t)^{-1/(b_1+b_2)}\\
			V(t)=
			\begin{cases}
				\left[
				v^{-a_2}
				+
				\frac{a_2}{b_1}\big((u^{-(b_1+b_2)}-(b_1+b_2)t)^{b_1/(b_1+b_2)}-u^{-b_1}\big)
				\right]^{-1/a_2}, \quad b_1 \neq 0,\\[10pt]
				\left[
				v^{-a_2}
				- a_2 \ln\!\left(\dfrac{(u^{-(b_1+b_2)}-(b_1+b_2)t)^{-1/(b_1+b_2)}}{u}\right)
				\right]^{-1/a_2}, \quad b_1=0.
			\end{cases}
		\end{array}
		\\
		\hline
		
	\end{array}
	$}
	\end{center}
	\noindent This can be combine with Propositions \ref{phaores} and \ref{chendumrabez} one 
	\[U(t)^{b_2+b_1+b_0}V(t)^{a_2+a_0}=\sum_{n\geq 0} \Big( \sum_{k=0}^n \suit{T} u^{b_2n+b_1 k +b_0+b_1+b_2} v^{a_2 n + a_1 k +a_0+a_2}\Big) \frac{t^n}{n!}.\]
	\noindent For instance, if we take the case where $a_2,b_1+b_2\notin \{0,-1\}$, $b_1\not=0$ for $u=1$ and 	$y=v^{a_1}, x = y^{a_2/a_1}t$,:
	
	\[	\begin{aligned}
&\left(1-(b_1+b_2)t\, y\right)^{-(b_1+b_2+b_0)/(b_1+b_2)}
\\
&\quad \cdot
\left[
y
+
\frac{a_2}{b_1}
\left(
\left(1-(b_1+b_2)t\, y\right)^{b_1/(b_1+b_2)} - 1
\right)
\right]^{-(a_2+a_0)/a_2}
\\
&=
y^{(a_0+a_2)/a_1}
\sum_{n\ge 0}
\frac{t^n}{n!}
\sum_{k=0}^n T(n,k) y^k.
\end{aligned}\]

\begin{prop} If  $a_2,b_1+b_2\notin \{0,-1\}$, $b_1\not=0$, then
\[
\begin{aligned}
	\sum_{n\ge 0}\sum_{k=0}^n T(n,k)\, y^k \frac{t^n}{n!}
	&=
	y^{-\frac{a_0+a_2}{a_1}}
	\left(1-(b_1+b_2)t\, y^{-\frac{a_2}{a_1}}\right)^{-\frac{b_0+b_1+b_2}{b_1+b_2}} \\
	&\quad \times
	\left[
	y
	+ \frac{a_2}{b_1}
	\left(
	\left(1-(b_1+b_2)t\, y\right)^{\frac{b_1}{b_1+b_2}} - 1
	\right)
	\right]^{-\frac{a_0+a_2}{a_2}}\\
	&=
	\left(1-(b_1+b_2)t\, y\right)^{-\frac{b_0+b_1+b_2}{b_1+b_2}}
	\left[
	1 + \frac{a_2}{b_1y}\, 
	\left(
	\left(1-(b_1+b_2)t\, y\right)^{\frac{b_1}{b_1+b_2}} - 1
	\right)
	\right]^{-\frac{a_0+a_2}{a_2}}.
\end{aligned}
\]

%\[\begin{aligned}
	%F(z,u)=	\sum_{n\ge 0}\sum_{k=0}^n T(n,k)\, u^k \frac{z^n}{n!}
	%=
	%	\left(1-(b_1+b_2)z\, u\right)^{-\frac{b_0+b_1+b_2}{b_1+b_2}}
	%	\left[
	%	1 + \frac{a_2}{b_1u}\, 
	%	\left(
	%	\left(1-(b_1+b_2)z\, u\right)^{\frac{b_1}{b_1+b_2}} - 1
	%	\right)
	%	\right]^{-\frac{a_0+a_2}{a_2}}.
	%\end{aligned}
	%\]
	%
	%
	%\(G(z,u):=	\left(1-(b_1+b_2)z\, u\right)^{-\frac{b_0+b_1+b_2}{b_1+b_2}}\) and \(H(z,u):=	\left[
	%1 + \frac{a_2}{b_1u}\, 
	%\left(
	%\left(1-(b_1+b_2)z\, u\right)^{\frac{b_1}{b_1+b_2}} - 1
	%\right)
	%\right]^{-\frac{a_0+a_2}{a_2}}\)

\end{prop}

\end{document}